\newcommand{\chapter}
\newtheorem{thm}{Theorem}[section]
\newtheorem{cor}[thm]{Corollary}
\newtheorem{lem}[thm]{Lemma}
\newtheorem{prop}[thm]{Proposition}
\newtheorem{conj}[thm]{Conjecture}
\newtheorem{claim}[thm]{Claim}
\theoremstyle{definition}
\newtheorem{defn}[thm]{Definition}
\theoremstyle{remark}
\newtheorem{rem}[thm]{Remark}
\numberwithin{equation}{section}
\newtheorem{eg}[thm]{Example}
\newcommand{\set}[1]{\left\{#1\right\}}
\newcommand{\into}{\hookrightarrow}
\newcommand{\onto}{\twoheadrightarrow}
\newcommand{\id}{\mathbf{1}}
\newcommand{\quotes}[1]{\textquoteleft#1'}
\newcommand{\Ext}{\mbox{Ext}}
\newcommand{\RHom}{\mbox{RHom}}
\newcommand{\Hom}{\mbox{Hom}}
\newcommand{\End}{\mbox{End}}
\newcommand{\iso}{\stackrel{\sim}{\longrightarrow}}
\newcommand{\C}{\mathbb{C}}
\newcommand{\Z}{\mathbb{Z}}
\newcommand{\cO}{\mathcal{O}}
\renewcommand{\P}{\mathbb{P}}
\newcommand{\Perf}{\mbox{\textit{Perf}}}
\renewcommand{\bigwedge}{\mbox{\large{$\wedge$}}}
\renewcommand{\chapter}{%
    \secdef\Chapter\sChapter}
\newcommand{\Chapter}[2][?]{%
    \newpage%
    \refstepcounter{chapter}
    \addcontentsline{toc}{chapter}
        {\protect\numberline{\thechapter} #1}%
    {\raggedright%
        \normalfont\rmfamily\Huge\bfseries
        Chapter \thechapter:\par
        }
    {\raggedright%
        \normalfont\rmfamily\Huge\bfseries
        #2\par}
    \chaptermark{#1}
    \bigskip
    \noindent%
    \!\!}
\newcommand{\sChapter}[1]{
    \newpage%
    {\raggedright%
        \normalfont\rmfamily\Huge\bfseries
        #1\par}
    \chaptermark{#1}
    \bigskip
    }
\renewcommand{\subsection}{%
    \secdef\Subsection\sSubsection}
\newcommand{\Subsection}[2][?]{%
    \refstepcounter{subsection}%
        \vspace{-0.1\baselineskip}%
    {\flushleft%
        \thesubsection\ \ %
        \normalfont\normalsize\scshape%
        #2%
        \par}%
    \addcontentsline{toc}{subsection}
        {\protect\numberline{\thesubsection} #1}%
    \nopagebreak
    \vspace{0.5\baselineskip}%
    }
\newcommand{\sSubsection}[1]{%
    \vspace{-0.1\baselineskip}%
    {\flushleft%
        \normalfont\normalsize\sffamily#1%
        \par}%
        \nopagebreak
    \vspace{0.5\baselineskip}%
    }
\renewcommand{\subsubsection}{%
    \secdef\Subsubsection\sSubsubsection}
\newcommand{\Subsubsection}[2][?]{%
    \refstepcounter{subsubsection}%
    \vspace{-0.1\baselineskip}%
    {\flushleft%
        \thesubsubsection\ \ %
        \normalfont\normalsize\itshape%
        #2%
        \par}%
    \addcontentsline{toc}{subsection}
        {\indent\protect\numberline{\thesubsubsection} #1}%
    \nopagebreak
    \vspace{0.5\baselineskip}%
    }
\newcommand{\sSubsubsection}[1]{%
    \vspace{-0.1\baselineskip}%
    {\flushleft%
        \normalfont\normalsize\sffamily#1%
        \par}%
        \nopagebreak
    \vspace{0.5\baselineskip}%
    }
\renewcommand{\section}{\@startsection
    {section}
    {1}
    {0mm}
    {-0.6\baselineskip}
    {0.5\baselineskip}
    {\normalfont\large\scshape\centering}}
\begin{document}

\title{Equivalences between GIT quotients of Landau-Ginzburg B-models}%
\author{Ed Segal}%

\maketitle

\begin{abstract}
We define the category of B-branes in a (not necessarily affine) Landau-Ginzburg B-model, incorporating the notion of R-charge. Our definition is a direct generalization of the category of perfect complexes. We then consider pairs of Landau-Ginzburg B-models that arise as different GIT quotients of a vector space by a one-dimensional torus, and show that for each such pair the two categories of B-branes are quasi-equivalent. In fact we produce a whole set of quasi-equivalences indexed by the integers, and show that the resulting auto-equivalences are all spherical twists.  
\end{abstract}

\tableofcontents
\section{Introduction}\label{sectintro}

The starting point for this paper is the celebrated result of Orlov \cite{orlov3} that the derived category of a Calabi-Yau hypersurface $Y$ in projective space is equivalent to the triangulated category of graded matrix factorizations for the homogeneous polynomial $f$ defining $Y$. In physicists' language this is the statement that the categories of topological B-branes are the same in the sigma model with target $Y$ and in the Landau-Ginzburg model with superpotential $f$. This is just part of a much deeper conjecture which goes back to Witten \cite{witten} (based on earlier observations by Vafa and others) and which states that the conformal field theories associated to these two models are different limit points in the same moduli space, the so-called Stringy K\"ahler Moduli Space (SKMS). The B-twist of a conformal field theory is expected to be independent of your position in the SKMS, so it follows that the B-twisted theories of each model should be equivalent and in particular that their B-brane categories are the same.

The basics of Witten's idea are easy to understand, even for a mathematician.
A \textit{Landau-Ginzburg model} is a K\"ahler manifold $X$ with a holomorphic function $W$ called the \textit{superpotential}. From such a thing one can write down a standard supersymmetric Lagrangian, analogous to the Lagrangian in classical mechanics coming from a Riemannian manifold equipped with a real-valued potential function. We consider the LG model
$$ X= \C^{n+1}_{x_1,...,x_n, p} \;\;\;\;\;\;\; W = f(x)p$$
where $f$ is a homogeneous degree $n$ polynomial in the $x_i$'s. Now we \quotes{gauge} this theory, which means we try and divide by the $\C^*$ symmetry under which each $x_i$ has weight 1 and $p$ has weight $-n$. The resulting theory should be the same, at least in some limit, as the theory coming from the quotient LG model. However we should take the quotient carefully, which means take a GIT quotient, and here we have two choices. One is the total space of the canonical bundle on $\mathbb{P}^{n-1}$
$$X_+ = K_{\mathbb{P}^{n-1}}$$
and the other is the affine orbifold
$$X_- = [\C^n / \Z_n]$$
The function $W$ descends to either of these, so they are both LG models. Physically, there is a parameter (the complexified Fayet-Iliopoulos parameter) in the Lagrangian of the gauged theory, and the theories on $X_+$ and $X_-$ are expected to appear at two different limits of this parameter. 

The important thing about a LG model is the set of critical points for the function $W$. On $X_+$ this will be the hypersurface $Y\subset \mathbb{P}^{n-1}$. If we assume that this is smooth, then locally around $Y$ the function $W$ is just quadratic in the normal directions, physically these directions are then \quotes{massive modes} and can be \quotes{integrated out} in the low-energy theory. What this means is that we should expect the theory coming from $(X_+, fp)$ to look essentially like a theory based just on $Y$, with no superpotential. 
We conclude that the theory on $Y$ is connected, in the moduli space of theories, to the theory on the orbifold $[\C^n/\Z_n]$ with superpotential $W$. This is called the \textit{Calabi-Yau/Landau-Ginzburg correspondence}. 

On the level of conformal field theories, this story is beyond the reach of current mathematical technology. However, all the spaces here are Calabi-Yau, which means the theories admit \quotes{B-twists} which are topological field theories (more precisely they are Topological Conformal Field Theories/Cohomological Field Theories) and these are much more tractable. And as mentioned above, the B-twisted theories should be independent of the FI parameter, and so the result to prove is that the B-model (the B-twisted TCFT) arising from $Y$ is the same as the B-model arising from the Landau-Ginzburg model $(X_-, fp)$.

The \textit{open sector} of a B-model is the category of B-branes, these are a type of boundary condition for the CFT. This should be a Calabi-Yau dg-category. For a LG model with $W=0$, the category of B-branes is (a dg-enhancement of) the bounded derived category of coherent sheaves. When $W\neq 0$ we need a generalization of this, the idea (due to Kontsevich) is to use objects that are like chain-complexes of sheaves, except that we now have $d^2=W$ instead of $d^2=0$. We define this category, denoted $Br(X,W)$ for a LG model $(X,W)$, in Section \ref{LGmodels}. In particular, the homotopy category of $Br(X_-,fp)$ is the category of matrix factorizations of $f$ on $X_-=[\C^n/Z_n]$, and the homotopy category of $(Y,0)$ is $D^b(Y)$.

Orlov's result is thus the statement that $H_0(Br(Y,0))=H_0(Br(X_-, fp))$, i.e. the homotopy categories of the open sectors of the two B-models are the same. In fact this goes a long way to proving that the whole of the B-models are the same, to get the full statement one would have to show that the open sectors are equivalent as Calabi-Yau dg-categories, the closed sectors should then follow by Costello's theorem \cite{costello2}. However this is not the aim of the current paper. Rather, we wanted to try to re-prove Orlov's result following more closely the ideas in Witten's construction, which does not appear in Orlov's proof. In particular we wanted to see the equivalence as the composition of two equivalences:
$$  (X_-, fp) \;\;\longleftrightarrow  \;\; (X_+, fp)  \;\;\longleftrightarrow \;\;Y$$
This both clarifies the result and suggests how to generalise it. For the first equivalence, we assume that the fundamental relationship between $X_-$ and $X_+$ is that they are birational, being related by a change of GIT quotient is just a special case of this. Hence we conjecture (Conjecture \ref{birational}) that the B-models associated to any two birational Calabi-Yau LG models are equivalent (to be more precise, we just conjecture that their open sectors are equivalent as dg-categories). This conjecture will be obvious to experts, it is a generalization of a theorem of Bridgeland \cite{bridgeland3} and lies in the same circle of ideas as Ruan's Crepant Resolution conjecture \cite{ruan}.

Unfortunately we do not get as far as addressing the second equivalence in this paper. We'll just remark that although it looks mysterious, it is just a global version of a fairly classical result by Kn\"orrer \cite{knorrer}, and a closely related result is proved by Orlov \cite{orlov2}.

What we actually manage to prove in this paper is a slight generalisation of the first equivalence, and so a small step towards our general conjecture. We show (Theorem \ref{GITequivalence}) that if $X_+$ and $X_-$ are two different GIT quotients of a vector space $V$ by $\C^*$, and $W$ is an invariant polynomial on $V$, then
$$Br(X_+,W)\simeq Br(X_-, W)$$
are equivalent dg-categories. Our proof borrows heavily from the work of Hori, Herbst and Page \cite{HHP}, in which they give a detailed physical argument for a generalisation of Orlov's result. Their key idea is a \textit{grade restriction rule}. Their reasoning involves A-branes and is mathematically rather mysterious, however the rule itself will be instantly familiar to anyone who knows Beilinson's Theorem \cite{beilinson}. Our improvement on their result, other than making it mathematically rigourous, is that they work only with the objects of the B-brane category whereas we include the morphisms as well (the massless open strings).

We want to explain one last aspect of the physics picture. The SKMS (the space in which the FI parameters live) can be explicitly described for our examples: it is a cylinder with one puncture, and the two GIT quotient LG models live at either end of the cylinder. The category of B-branes is the same for all points in this space, however we cannot trivialise it globally, i.e. there is monodromy. Therefore to get an equivalence between the B-brane categories of $X_+$ and $X_-$ we must pick a path between the two ends of the cylinder. Up to homotopy there are $\Z$ such paths, so we should find $\Z$ such equivalences. This is what we, and Orlov, find. By composing equivalences we get autoequivalences of the B-brane category at either end, this is the monodromy around the puncture. The puncture is the limit where the mass of a particular brane goes to zero, and the monodromy should be a Seidel-Thomas spherical twist around this brane. There is also monodromy around each end of the cylinder, this should just be given by tensoring with the $\cO(1)$ line-bundle.

Now we can explain the layout of the paper.

In Section \ref{sketch} we give a sketch of our method for the special case that $W=0$. This means we don't have to worry about LG models, we just deal with the more familiar case of derived categories of coherent sheaves. We show that we can construct $\Z$ many derived equivalences between $X_+$ and $X_-$, and that the resulting autoequivalences are spherical twists.

In Section \ref{LGmodels} we explain properly what a LG B-model is, and what the category of B-branes is. 

In Section \ref{quotients} we describe the class of examples we will consider. These are pairs of LG B-models $(X_+,W)$ and $(X_-,W)$ that are different GIT quotients of a vector space by $\C^*$.

In Section \ref{quasiequivalences} we prove our main result, that there are $\Z$ many quasi-equivalences between the categories of B-branes on $(X_+,W)$ and $(X_-, W)$.

In Section \ref{sphericaltwists} we describe the resulting auto-quasi-equivalences of the category of B-branes on $(X_+,W)$. We show (more-or-less) that they are spherical twists.

The technology of LG B-models is in its infancy, so many of the arguments of the last two sections are rather messy and ad-hoc. In particular the \quotes{more-or-less} of the previous paragraph is because we do not have a proper theory of Fourier-Mukai transforms. We apologise to the reader for this unsatisfactory state-of-affairs, and hope that later treatments will clean these results up a bit. 

\textbf{Acknowledgements.} I'd like to thank Richard Thomas for helpful suggestions, Manfred Herbst for patiently explaining \cite{HHP} to me, and the geometry department at Imperial College for sitting through some lectures on this material when it was in preliminary form.

Some results closely related to those of this paper (although using the \quotes{derived category of singularities} description of the category of B-branes) have been been found independently by \cite{HeW} and \cite[Section 7]{KKOY}.

\subsection{A Sketch Proof for $W=0$}\label{sketch}

As we will see in Section \ref{LGmodels}, a special case of the category of B-branes in a Landau-Ginzburg B-model is the category $\Perf(X)$ of perfect complexes on a smooth space $X$, which is a dg-model for the derived category $D^b(X)$. We thought it would be helpful to explain the proof of our results in this special case, as $D^b(X)$ is probably more familiar than $Br(X,W)$. Also the proof in this case is quite simple and still contains the important points for the more general case, the hard work in generalizing is mostly technicalities. 

For this sketch, we'll use the example of the standard three-fold flop. This is of course well understood and we will say nothing particularly original, but we will indicate afterwards how to generalise.

Let $V= \C^{4}$ with co-ordinates $x_1,x_2, y_1, y_2$, and let $\C^*$ act on $V$ with weight 1 on each $x_i$ and weight $-1$ on each $y_i$. There are two possible GIT quotients $X_+$ and $X_-$, depending on whether we choose a positive or negative character of $\C^*$. Both are isomorphic to the total space of the bundle $\cO(-1)^{\oplus 2}$ over $\mathbb{P}^1$.

Both are open substacks of the Artin quotient stack
$$\mathcal{X} = [V / \C^*]$$
given by the semi-stable locus for either character. Let
$$\iota_{\pm}: X_{\pm} \into \mathcal{X}$$
denote the inclusions. This stacky point of view makes it clear that there are (exact) restriction functors
$$\iota^*_{\pm}: D^b(\mathcal{X}) \to D^b(X_{\pm})$$
By $D^b(\mathcal{X})$ we mean the derived category of the category of $\C^*$-equivariant sheaves on $V$. This contains the obvious equivariant line-bundles $\cO(i)$ associated to the characters of $\C^*$. 

The unstable locus for the negative character is the set $\set{y_1=y_2=0}\subset V$. Consider the Koszul resolution of the associated sky-scraper sheaf:
$$K_- = \cO(2) \stackrel{(y_2, -y_1)}{\longrightarrow} \cO(1)^{\oplus 2} \stackrel{(y_1, y_2)}{\longrightarrow}\cO $$
Then $\iota_- K_-$ is exact, it is the pull-up of the Euler sequence from $\P^1_{y_1:y_2}$. On the other hand $\iota_+ K_-$ is a resolution of the sky scraper sheaf $\cO_{\P^1_{x_1:x_2}}$ along the zero section. Similar comments apply for the Koszul resolution $K_+$ of the set $\set{x_1=x_2=0}$.

Let
$$\mathcal{G}_t  \subset D^b(\mathcal{X})$$
be the triangulated subcategory generated by the line bundles $\cO(t)$ and $\cO(t+1)$.
This is the \textit{grade restriction rule} of \cite{HHP}, we are restricting to characters lying in the \quotes{window} $[t, t+1]$.

\begin{claim} For any $t\in\Z$, both $\iota^*_+$ and $\iota^*_-$ restrict to give equivalences
$$D^b(X_+) \stackrel{\sim}{\longleftarrow} \mathcal{G}_t \iso D^b(X_-)$$
\end{claim}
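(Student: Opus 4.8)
The plan is to analyze the restriction functors $\iota^*_\pm$ on the generating line bundles $\cO(t), \cO(t+1)$ and leverage the semiorthogonal decompositions of $D^b(\mathcal{X})$ coming from the two GIT chambers. First I would note that $\mathcal{X}$ is a smooth quotient stack, so $D^b(\mathcal{X})$ is generated by the line bundles $\cO(i)$, $i \in \Z$. The key structural fact I would establish is a semiorthogonal-type decomposition: the functor $\iota^*_-\colon D^b(\mathcal{X}) \to D^b(X_-)$ kills precisely the subcategory generated by those $\cO(i)$ with $i$ outside the window (split according to sign), because objects supported on the unstable locus $\{y_1 = y_2 = 0\}$ restrict to zero on $X_-$. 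Concretely, the Koszul complex $K_-$ exhibits $\cO(2)$ as an extension built from $\cO, \cO(1)$ together with something that restricts to an exact complex on $X_-$ — more precisely, twisting $\iota_-K_-$ by $\cO(j)$ and using that $\iota_-K_-$ is exact shows that on $X_-$ the bundle $\cO(j+2)$ lies in the triangulated span of $\cO(j), \cO(j+1)$. Iterating upward and downward, every $\cO(i)|_{X_-}$ lies in the span of $\cO(t)|_{X_-}, \cO(t+1)|_{X_-}$, so $\iota^*_-$ restricted to $\mathcal{G}_t$ is \emph{essentially surjective}. The same argument with $K_+$ handles $\iota^*_+$ and the other end.

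The second half is \emph{full faithfulness}. For this I would compute morphism spaces: I need $\Hom_{D^b(\mathcal{X})}(\cO(a), \cO(b)[k]) \iso \Hom_{D^b(X_\pm)}(\cO(a), \cO(b)[k])$ for $a, b \in \{t, t+1\}$. On the stack side these are $H^k$ of the $\C^*$-equivariant sheaf cohomology of $\cO(b-a)$ on $V$, i.e. the weight-zero part of $\C[x_1,x_2,y_1,y_2] \otimes \cO(b-a)$, concentrated in degree $0$; since $b - a \in \{-1, 0, 1\}$ and $V = \C^4$ with the given weights, these are easy to read off (e.g. $\Hom(\cO(t), \cO(t+1))$ is spanned by $x_1, x_2$, and $\Hom(\cO(t+1), \cO(t)) = 0$, with no higher Ext). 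On $X_\pm \cong \mathrm{Tot}(\cO(-1)^{\oplus 2} \to \P^1)$ one computes the same Homs via the pushforward to $\P^1$ and the projection formula, using that the higher cohomology of the symmetric powers of $\cO(1)^{\oplus 2}$ against $\cO(-1), \cO, \cO(1)$ vanishes in the relevant range. The upshot is that the restriction map on Homs is an isomorphism for all pairs drawn from $\{\cO(t), \cO(t+1)\}$, and then a standard dévissage argument extends this to all objects of $\mathcal{G}_t$ (the subcategory on which the functor is fully faithful is triangulated and contains the generators).

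The main obstacle I expect is not any single cohomology computation but rather organizing the window/grade-restriction bookkeeping cleanly: one must check that the Koszul-complex argument really does let one climb \emph{both} directions out of an arbitrary window $[t, t+1]$ and land in the span of exactly two consecutive line bundles, and that the vanishing ranges for the Ext computations line up precisely with that same window. In the $W = 0$ sketch this is clean because $\P^1$ and $\cO(-1)^{\oplus 2}$ are so explicit; the remark in the introduction that "the hard work in generalizing is mostly technicalities" signals that the genuine difficulty is deferred to the LG case, where one must redo all of this with the twisted differential $d^2 = W$ and without a good Fourier--Mukai formalism. For the sketch itself, once essential surjectivity and full faithfulness on generators are in hand, the equivalence follows formally, and the fact that it holds for \emph{every} $t$ gives the promised $\Z$-family.
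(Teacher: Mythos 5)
Your proposal follows essentially the same route as the paper: full faithfulness by checking that restriction is an isomorphism on $\Ext^\bullet$ between the generating line bundles (equivalently on $H^\bullet(\cO(i))$ for $i\in[-1,1]$), and essential surjectivity by generating $D^b(X_\pm)$ from $\set{\cO(t),\cO(t+1)}$ via twists of the Koszul complexes $K_\pm$, which restrict to exact sequences on the respective GIT quotients. One slip in your illustrative computation: $\Hom_{\mathcal{X}}(\cO(t+1),\cO(t))$ is the weight $-1$ part of $\C[x_1,x_2,y_1,y_2]$, which contains $y_1,y_2$ and is certainly nonzero (and $\Hom(\cO(t),\cO(t+1))$ is the full weight $+1$ part, not just the span of $x_1,x_2$) --- but since the restriction maps are still isomorphisms in these cases this does not affect the argument.
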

To see that these functors are fully-faithful it suffices to check what they do to the maps between the generating line-bundles, so we just need to check that
$$\Ext^\bullet_\mathcal{X}(\cO(t+k), \cO(t+l)) = \Ext^\bullet_{X_\pm}(\cO(t+k), \cO(t+l))$$
for $ k,l\in[0,1]$, i.e.
$$H^\bullet_\mathcal{X}(\cO(i)) =  H^\bullet_{X_\pm}(\cO(i))$$
for $i\in [-1,1]$, and this is easily verified. To see that they are essentially surjective we need to know that the the two given line bundles generate $D^b(X_\pm)$. This is essentially a corollary of Beilinson's Theorem \cite{beilinson}. One way to see it is to first observe that the set $\set{\cO(i), i\in \Z}$ generates $D^b(X_\pm)$  because $X_\pm$ is quasi-projective, then use twists of the exact sequence $\iota_\pm K_\pm$ repeatedly to resolve any $\cO(i)$ by a complex involving only $\cO(t)$ and $\cO(t+1)$.

So for any $t\in \Z$ we have a derived equivalence
$$\Phi_t: D^b(X_+) \iso D^b(X_-)$$
passing through $\mathcal{G}_t$. Composing these, we get auto-equivalences $$\Phi_{t+1}^{-1}\Phi_t: D^b(X_+) \iso D^b(X_+)$$
To see what these do, we only need to check them on the generating set of line-bundles $\set{\cO(t),\cO(t+1)}$. Applying $\Phi_t$ to this set is easy, it just sends them to the same line-bundles on $X_-$.\footnote{The easiest sign convention is to keep $y_1$ and $y_2$ as degree -1 on both sides, i.e. it's the $\cO(-1)$ bundle on $\P^1_{y_1:y_2}$ that has global sections. Otherwise $\Phi_t$ sends $\cO(t)$ to $\cO(-t)$.} To apply $\Phi_{t+1}^{-1}$ however, we first have to resolve $\cO(t)$ in terms of $\cO(t+1)$ and $\cO(t+2)$.  We do this using the exact sequence $\iota_- K_-(t)$. The result is that $\Phi_{t+1}^{-1}\Phi_t$ sends
\begin{eqnarray*}\cO(t) &\mapsto& [\cO(t+2) \stackrel{(-y_2, y_1)}{\longrightarrow} \cO(t+1)^{\oplus 2}]\\
\cO(t+1)& \mapsto &\cO(t+1) \end{eqnarray*}
\begin{claim} $\Phi_{t+1}^{-1}\Phi_t$ is an inverse spherical twist around $\cO_{\P^1_{x_1:x_2}}(t)$.\end{claim}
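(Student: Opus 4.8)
The goal is to identify the autoequivalence $\Psi:=\Phi_{t+1}^{-1}\Phi_t$ of $D^b(X_+)$ with the inverse Seidel--Thomas twist $T_S^{-1}$ about $S:=\cO_{\P^1_{x_1:x_2}}(t)$. I would proceed in three steps.

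\textbf{Step 1: $S$ is spherical.} Since $X_+=\mathrm{Tot}\big(\cO_{\P^1}(-1)^{\oplus 2}\big)$ is a Calabi--Yau threefold and $\P^1_{x_1:x_2}\into X_+$ is the zero section, with normal bundle $\cO(-1)^{\oplus 2}$, I would resolve $\cO_{\P^1}$ by the Koszul complex of its tautological section and run the local-to-global spectral sequence to get
$$\RHom_{X_+}(S,S)\;=\;\bigoplus_i H^\bullet\!\big(\P^1,\;\wedge^i(\cO(-1)^{\oplus 2})\big)[-i]\;=\;\C\oplus\C[-3],$$
the cohomology of $S^3$; combined with the Calabi--Yau pairing on $X_+$ this is exactly the spherical condition. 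Hence $T_S$ is a well-defined auto-quasi-equivalence of $\Perf(X_+)$, with $T_S^{-1}$ characterised by the functorial triangle $T_S^{-1}(E)\to E\to\RHom_{X_+}(E,S)^\vee\otimes S\xrightarrow{+1}$.

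\textbf{Step 2: agreement on a generating set.} I would evaluate $T_S^{-1}$ on $\cO(t)$ and $\cO(t+1)$. As $\RHom_{X_+}(\cO(t+1),S)=H^\bullet(\P^1,\cO(-1))=0$, the triangle gives $T_S^{-1}(\cO(t+1))=\cO(t+1)$. As $\RHom_{X_+}(\cO(t),S)=H^\bullet(\P^1,\cO)=\C$ in degree $0$, spanned by the restriction $r\colon\cO(t)\onto S$, the coevaluation map is $r$ and the triangle reads $T_S^{-1}(\cO(t))\to\cO(t)\xrightarrow{r}S\xrightarrow{+1}$, so $T_S^{-1}(\cO(t))$ is the fibre of $r$, namely the twisted ideal sheaf $I_{\P^1}(t)$ of the zero section, which is resolved by the truncated Koszul complex $[\cO(t+2)\to\cO(t+1)^{\oplus 2}]$ of $(y_1,y_2)$. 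These values coincide with those found above for $\Psi$. I would then observe that the triangle $I_{\P^1}(t)\to\cO(t)\xrightarrow{r}S$ on $X_+$ is (a rotation of) the image under $\iota_+^*$ of the ``window-shift'' triangle $\cO(t)\into K_-(t)\to[\cO(t+2)\to\cO(t+1)^{\oplus 2}]$ on $\mathcal{X}$, using $\iota_+^*K_-(t)\simeq S$ and $\iota_-^*K_-(t)\simeq 0$; this is precisely the mechanism by which $\Psi$ re-expresses $\cO(t)$ inside $\mathcal{G}_{t+1}$, so it both re-derives the action of $\Psi$ on objects and exhibits a natural transformation $\Psi\Rightarrow\mathrm{id}$ (globalising the ideal-sheaf inclusions $I_{\P^1}(t)\into\cO(t)$) whose cone should be the functor $E\mapsto\RHom_{X_+}(E,S)^\vee\otimes S$.

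\textbf{Step 3: from a generating set to an isomorphism of functors.} Finally I would upgrade ``agrees on $\cO(t),\cO(t+1)$'' to a quasi-isomorphism of functors via derived Morita theory. The object $G:=\cO(t)\oplus\cO(t+1)$ generates $\Perf(X_+)$ and $A:=\REnd_{X_+}(G)$ is an ordinary algebra concentrated in degree $0$ (each $\RHom_{X_+}(\cO(t+a),\cO(t+b))$, $a,b\in\{0,1\}$, is a degree-$0$ sheaf-cohomology group), so every exact dg-endofunctor of $\Perf(X_+)$ is given by tensoring with an object of $D(A\otimes_\C A^{\mathrm{op}})$ and is determined by it. Since $\Psi(G)$ and $T_S^{-1}(G)$ are equal as objects, what remains is to check that $\Psi$ and $T_S^{-1}$ induce the same maps on the four blocks $\RHom_{X_+}(\cO(t+a),\cO(t+b))\to\RHom_{X_+}(F(\cO(t+a)),F(\cO(t+b)))$, i.e. that the two bimodules carry the same left $A$-action; concretely this means tracking the generators $x_1,x_2$ and the triangle $I_{\P^1}(t)\to\cO(t)\to S$ through the definition of $\Psi$. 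I expect Step 3 to be the real obstacle: with no theory of Fourier--Mukai kernels at hand one cannot invoke ``isomorphic on a generator $\Rightarrow$ isomorphic'' for functors, so the argument degenerates into the ad-hoc bookkeeping of pushing the resolutions $\iota_\pm^*K_\pm$ through all the morphism complexes rather than just on objects --- exactly the ``messy and ad-hoc'' work the paper warns of --- which is also why, in the Landau--Ginzburg generalisation where even derived Morita theory is not cleanly available, the identification with the spherical twist can only be established ``more or less''.
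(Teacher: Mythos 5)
Your argument is essentially the paper's: identify $S\simeq\iota_+^*K_-(t)$ as spherical, evaluate $T_S^{-1}$ on the generating pair $\cO(t),\cO(t+1)$ via the Koszul resolution, and observe that the answers match the window-shift computation of $\Phi_{t+1}^{-1}\Phi_t$; the paper likewise stops there and explicitly notes that completing the proof requires matching the induced maps on the Hom-sets between the generators. Your Step 3 (tilting on $G=\cO(t)\oplus\cO(t+1)$, whose endomorphism algebra is concentrated in degree $0$, so a dg-functor is determined by its bimodule) is a correct and slightly more explicit formalisation of that remaining check, which, like the paper, you do not carry out.
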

A spherical twist is an autoequivalence discovered by \cite{ST} associated to any \textit{spherical} object in the derived category, i.e. an object $S$ such that
$$\Ext(S,S) = \C \oplus \C[-n]$$
for some $n$ (i.e. the homology of the $n$-sphere). It sends any object $\mathcal{E}$ to the cone on the evaluation map
$$[\RHom(S, \mathcal{E})\otimes S \longrightarrow \mathcal{E}]$$
The inverse twist sends $\mathcal{E}$ to the cone on the dual evaluation map
$$[\mathcal{E} \longrightarrow \RHom(\mathcal{E}, S)^\vee \otimes S]$$
The object $\cO_{\P^1_{x_1:x_2}}(t) \simeq \iota_+K_-(t)$ is spherical, and the inverse twist around it sends $\cO(t+1)$ to itself and $\cO(t)$ to the cone
$$[\cO(t) \longrightarrow \iota_+K_-(t)] \;\;\;\simeq \;\;\; [\cO(t+2) \stackrel{(-y_2, y_1)}{\longrightarrow} \cO(t+1)^{\oplus 2}]$$
which agrees with $\Phi_{t+1}^{-1}\Phi_t$. To complete the proof of the claim we would just need to check that the two functors also agree on the Hom-sets between $\cO(t)$ and $\cO(t+1)$.

Now instead let $V =\C^{p+q}$ with co-ordinates $x_1,...,x_p,y_1,...,y_q$. Let $\C^*$ act linearly on $V$ with positive weights on each $x_i$ and negative weights on each $y_i$. The two GIT quotients $X_+$ and $X_-$ are both the total spaces of orbi-vector bundles over weighted projective spaces.

We must assume the Calabi-Yau condition that $\C^*$ acts through $SL(V)$. Let $d$ be the sum of the positive weights, so the sum of the negative weights is $-d$. The above argument goes through word-for-word, where now
$$\mathcal{G}_t = \left< \cO(t),..., \cO(t+d-1)\right>$$

\section{Landau-Ginzburg B-models}\label{LGmodels}

A Landau-Ginzburg model is a K\"ahler manifold $X$ equipped with a holomorphic function $W$. We are only interested in the B-model on $(X,W)$, and this doesn't need the metric, just the complex structure. Also we want to work in the algebraic world, so for us $X$ will be a smooth scheme (or stack) over $\C$.

When $W=0$, it is a standard slogan that the category of B-branes is the derived category $D^b(X)$ of coherent sheaves on $X$. However the category of B-branes should really be a dg-category, whose homotopy category is $D^b(X)$ (for background on dg-categories, we recommend \cite{toen}). A good model is given by $\Perf(X)$, the category of perfect complexes. The objects of $\Perf(X)$ are bounded complexes of finite-rank vector bundles, and the morphisms are given by
$$\Hom(E^\bullet, F^\bullet) = \Gamma(\mathcal{H}om(E^\bullet, F^\bullet)\otimes \mathcal{A}^{0,\bullet})$$
(this is what we might call the \quotes{Dolbeaut} version of $\Perf(X)$, other versions are possible as we will discuss below). The differential here is a sum of the Dolbeaut differential $\bar{\partial}$ and the differential on $\mathcal{H}om(E^\bullet, F^\bullet)$, which itself is the commutator with the differentials on $E^\bullet$ and $F^\bullet$. The homology of this complex is 
$$\Ext^\bullet(E^\bullet, F^\bullet)= \Hom_{D^b(X)}(E^\bullet, F^\bullet)$$
 Futhermore since $X$ is smooth every object in $D^b(X)$ is quasi-isomorphic to a complex in $\Perf(X)$, so $H_0(\Perf(X)= D^b(X))$ as required.

We need to generalise this for $W\neq 0$. Kontsevich's idea was to modify the definition of a chain-complex, replacing $d^2 = 0$ with $d^2 = W$. This doesn't make sense on a $\Z$-graded complex, so the usual procedure (at least in the mathematics literature) is to work instead with $\Z_2$-graded complexes. However there is another possibility, standard in the physics literature, which is to replace the \quotes{homological} grading with the notion of \textit{R-charge} (strictly speaking, \textit{vector R-charge}). This is a geometric action of $\C^*$ on $X$, under which $W$ must have weight 2. Then we can define a \textit{B-brane} to be a $\C^*$-equivariant vector bundle $E$, with an endormorphism $d$ of R-charge 1, and the condition $d^2 = W\id_E$ makes sense. 
If the $\C^*$ action is trivial then we are forced to take $W=0$, and we recover the definition of a perfect complex. Also, the definition of the morphism chain-complexes in $\Perf(X)$ adapts easily, as we shall see.

\begin{defn} A Landau-Ginzburg B-model is the following data:
\begin{itemize}
\item A smooth $n$-dimensional scheme (or stack) $X$ over $\C$.
\item A choice of function $W\in \mathcal{O}_X$ (the \quotes{superpotential}).
\item An action of $\C^*$ on $X$ (the  \quotes{vector R-charge}).
\end{itemize}
such that 
\begin{enumerate}
\item $W$ has weight (\quotes{R-charge}) equal to 2.
\item $-1\in \C^*$ acts trivially.
\end{enumerate}
\end{defn}

From now on we'll call the $\mathbb{C}^*$ acting in this definition $\mathbb{C}^*_R$
to distinguish it from other $\C^*$ actions that will appear later.

\begin{rem}
In physics terms, Axiom 2 follows from the fact that the axial R-charge symmetry is acting trivially. It implies that the sheaf of functions $\cO_X$ is supercommutative under the $\C^*_R$ grading. We could relax it, but  keep supercommutativity, by allowing $X$ to be a superspace. 
\end{rem}

\begin{defn}
A \textit{B-brane} on a Landau-Ginzburg B-model $(X,W)$ is a finite-rank vector bundle
$E$, equivariant with respect to $\C^*_R$, equipped with an endomorphism
$d_E$ of R-charge 1 such that $d_E^2 = W\cdot\id_E$.
\end{defn}

If we wanted to be more pretentious we could say that $X$ is a space endowed with a sheaf of curved algebras ($W$ is the curvature) and that a B-brane is a locally free sheaf of curved dg-modules over $X$.

We can shift the R-charge on a B-brane $E$ by tensoring with a line bundle
associated to a character of $\C^*_R$. We denote these shifts by $E[n]$ for
$n\in \Z$. This agrees with the homological shift functor in the following special case:

\begin{eg}Let $W=0$ and $\C^*_R$ act trivally. Then a B-brane is just a bounded
complex of vector bundles.
\end{eg}

Note that since $-1\in \C^*_R$ acts trivially every B-brane splits as a direct sum
$$E = E^{ev} \oplus E^{od}$$
of its $\Z_2$-eigen-bundles, and $d_E$ exchanges these sub-bundles. There is a weaker definition of Landau-Ginzburg B-model where we keep only the trivial action of $\Z_2\subset \C^*_R$, thus only this $\Z_2$-grading remains. We shall make no use of this weaker definition, except for the following example.

\begin{eg} Let $X=\C^n$ and $W$ be any polynomial. This defines a LG B-model in the weak ($\Z_2$-graded) sense. Then for a B-brane $(E, d_E)$ both $E^{ev}$ and $E^{od}$ must be trivial bundles, so $d_E$ is given by a matrix
$$d_E = \left(\begin{array}{cc} 0 & d_E^0 \\ d_E^1 & 0 \end{array}\right)$$
whose square is $W\id$. This is a called a matrix factorization of $W$.
\end{eg}

We can't in general add R-charge to this example. But we can if we orbifold it, as follows.

\begin{eg} \label{affineorbifold}
Let $X=[\C^{n}_{x_1,...,x_n}\times \C^*_p \;/\; \C^*_G]$, where $\C^*_G$ (the \textit{gauge group}) acts with weight 1 on each $x_i$ and weight $-k$ on $p$. This is equivalent as a stack to $[\C^n/\Z_k]$. Let $\C^*_R$ act with weight 0 on each $x_i$ and weight 2 on $p$. If we pick a superpotential $W= f(x)p$ where $f(x)$ is a homogeneous degree $k$ polynomial in the $x_i$'s, then this defines a LG B-model (for $k=n$ it is the orbifold phase of the Witten construction described in the introduction). Every $\C^*_R$-equivariant vector bundle on $X$ is the direct sum of $\C^*_R$-equivariant line-bundles, these are given by the lattice
$$\Z^2/(-k,2)$$
This bijects with the subset $\Z\times[0,1]\subset \Z^2$. This means that we can consider a B-brane on $(X,W)$ to be given by a pair $(E^0,E^1)$ of graded free modules over the ring $\C[x_1,...,x_n]$ where each $x_i$ has degree 1, and graded maps 
$$d_E^0:E^0 \to E^1,\;\;\;\;\;\; d_E^1: E^1 \to E^0$$
with $d_E^0d_E^1=d_E^1d_E^0=f$. This is called a graded matrix factorization.
\end{eg}

Now we want to define the morphisms between two B-branes. We will precisely mimic the construction of $\Perf$, by first defining a homomorphism bundle and then taking derived global sections of it.

Recall that a B-brane on the  LG B-model $(X,0)$ is a $\C^*_R$-equivariant bundle $E$ on $X$ equipped with an endomorphism $d_E$ of R-charge 1 whose square is zero. Let $\mathbf{dg_R Vect}(X)$ be the category whose objects are B-branes on $(X,0)$ and whose morphisms are all morphisms of vector bundles. This is a dg-category, and when the $\C^*_R$ action on $X$ is trivial it is just the category $\mathbf{dgVect}(X)$ of complexes of vector bundles on $X$. It is also a monoidal category, since we can tensor equivariant bundle and their endomorphisms in the usual way. 

Now let $(X,W)$ be any LG B-model, and let $(E, d_E)$, $(F, d_F)$ be two B-branes on $(X,W)$. We have a $\C^*_R$-equivariant vector bundle 
$$\mathcal{H}om(E,F) = E^\vee\otimes F$$
and this carries an endomorphism 
$$d_{E,F}= \id_E^\vee\otimes d_F - d_E^\vee\otimes \id_F $$
of R-charge 1. One can check that 
$$d_{E,F}^2 = 0$$
(the two copies of $W$ that appear cancel each other). This means that the pair $(\mathcal{H}om(E,F), d_{E,F})$ is an object of $\mathbf{dg_R Vect}(X)$. Furthermore, given a third B-brane $(G, d_G)$, we have composition maps
$$\mathcal{H}om(E,F)\otimes \mathcal{H}om(F,G) \to \mathcal{H}om(E,G)$$
and these are closed and of degree zero.

\begin{defn} Given an LG-model $(X,W)$ we define a category $\mathcal{B}r(X,W)$
enriched over the category $\mathbf{dg_R Vect}(X)$.
The objects of $\mathcal{B}r(X,W)$ are the B-branes on $(X,W)$, and the morphisms
between two branes $E$ and $F$ are given by 
$$(\mathcal{H}om(E,F),d_{E,F})$$
\end{defn}

We need to fix a monoidal functor $R\Gamma: \mathbf{Vect(X)^{\C^*_R}}\to \mathbf{dgVect^{\C^*_R}}$ that sends a $\C^*_R$-equivariant vector bundle to a bounded $\C^*_R$-equivariant chain-complex of vector spaces that computes its derived global sections. Since we are working with smooth spaces over $\C$ we will use Dolbeaut resolutions, i.e. we define
$$R\Gamma(E) = (\Gamma (E\otimes \mathcal{A}_X^{0,\bullet}), \bar{\partial})$$
 but we could also use other models such as \v{C}ech resolutions with respect to some $\C^*_R$-invariant open cover.

Now $\mathcal{H}om(E,F)$ is an object in $\mathbf{dg_R Vect}(X)$. This means that 
$$R\Gamma(\mathcal{H}om(E,F)) =  \Gamma(\mathcal{H}om(E,F)\otimes\mathcal{A}_X^{0,\bullet})$$
is a bi-complex, graded by R-charge and by Dolbeaut degree, with differential
$$d_{E,F} + \bar{\partial}$$
 As usual we may collapse this bi-complex to a complex. If we apply this to all pairs of branes simultaneously we get the following:
 
\begin{defn}
Given an LG-model $(X,W)$ we define the dg-category of B-branes to be
$$Br(X,W) := R\Gamma (\mathcal{B}r(X,W))$$
\end{defn}

The monoidalness of $R\Gamma$ ensures that this is indeed a category.

\begin{eg} \label{perf}Let $W=0$ and $\C^*_R$ act trivially on $X$. Then $Br(X,0)
= \Perf(X)$, the category of perfect complexes. Since $X$ is smooth the homotopy
category of this is
$$H_0(Br(X,0)) = D^b(X)$$
\end{eg}

\begin{eg} Let $X=[\mathbb{C}^n/\Z_n]$ as in example \ref{affineorbifold}. Then the functor $\Gamma$ means \quotes{take $\mathbb{Z}_n$-invariants}, and this is exact, so we may let $R\Gamma_X = \Gamma_X$. The homotopy
category of $Br(X,W)$ is the category of graded B-branes $DGrB(W)$ defined by Orlov \cite{orlov3}. 
\end{eg}

\begin{rem} $Br(X,W)$ should only depend on a (Zariski) neighbourhood of the critical locus of $W$. This has been proved (without R-charge and on the level of homotopy categories) by Orlov \cite{orlov}.
\end{rem}

\begin{rem} As far as we are aware this definition is new in the mathematics literature, but it is almost classical in the physics literature, see e.g. \cite{HerbLaz}.\end{rem}

\begin{rem} We could make the definition of a B-brane more general by allowing the endomorphism $d_E$ to be derived, i.e.
$$d_E \in \Gamma(\mathcal{E}nd(E)\otimes \mathcal{A}_X^{0,\bullet})$$
with R-charge plus Dolbeaut degree equal to 2. Similarly we could generalize the definition of LG B-model by allowing $W$ to be a closed element of $\mathcal{A}_X^{0,\bullet}$.
The advantage of this more general definition of B-brane is that the resulting category contains mapping cones, i.e. it is pre-triangulated. However notice that in Example \ref{perf} above $\Perf(X)$ is already pre-triangulated, this leads us to suspect that at least when $W\in \cO_X$ our more restricted category of B-branes is in fact pre-triangulated as well. When $X$ is affine this is obvious.\end{rem} 

\begin{rem} \label{spectralsequence}
Since the Hom sets are actually bi-complexes, and the Dolbeaut grading is bounded, we have a spectral sequence converging to the homology of $R\Gamma(\mathcal{H}om(E,F))$ whose first page is
$$(H^\bullet(\mathcal{H}om(E,F)), d_{E,F})$$ 
\end{rem}

A map $f: (X,W) \to (X', W')$ of LG B-models is just a map from $X$ to $X'$ commuting with the R-charges and such that $f^*W'=W$. Assuming that the derived global sections functors $R\Gamma_X$ and $R\Gamma_{X'}$ are chosen compatibly we get a dg-functor
$$f^*: Br(X', W') \to Br(X, W)$$

Similarly a \textit{birational map} between $(X,W)$ and $(X',W')$ is a birational map from $X$ to $X'$ that commutes with R-charge and sends $W'$ to $W$.

\begin{conj}\label{birational}
Let $(X,W)$ and $(X', W')$ be birational LG B-models, and assume that $X$ and $X'$ are Calabi-Yau. Then there is a quasi-equivalence
$$Br(X,W) \simeq Br(X', W')$$
\end{conj}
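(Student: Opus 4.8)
The plan is to reduce the general birational case to the case of a flop, and then to attack a single flop by the variation-of-GIT technology underlying Theorem~\ref{GITequivalence}. First I would use the Calabi--Yau hypothesis to rigidify the birational map. A birational map between Calabi--Yau varieties cannot contract or extract a divisor — any such divisor would be discrepant and destroy the triviality of the canonical class on one side — so the map is an isomorphism in codimension one; since it is assumed to commute with $\C^*_R$ and to intertwine $W$ with $W'$, the two superpotentials already agree on the common open locus, which misses only a set of codimension $\geq 2$. By the theory of minimal models, in the form used by Bridgeland and Kawamata, two Calabi--Yau varieties isomorphic in codimension one are joined by a finite chain of flops; one should check that this chain can be taken $\C^*_R$-equivariantly and that $W$ extends compatibly across each flop (again because a flop only modifies a codimension $\geq 2$ locus). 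Hence it suffices to prove the statement when the birational map is a single flop.

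For a flop that is realised as a variation of GIT, I would run exactly the argument of Section~\ref{sketch}, upgraded to carry the superpotential as in the proof of Theorem~\ref{GITequivalence}. Concretely: realise $X$ and $X'$ as the semistable loci, for two adjacent chambers, inside a common Artin stack $\mathcal{X}$ carrying a $\C^*_R$-action and a function $W$ restricting to both sides; produce a ``window'' (grade-restriction) subcategory $\mathcal{G}\subset Br(\mathcal{X},W)$; and show that the restriction dg-functors give quasi-equivalences $Br(X,W)\stackrel{\sim}{\longleftarrow}\mathcal{G}\iso Br(X',W)$. Full faithfulness reduces, as in the sketch, to a cohomology-comparison: one checks that $R\Gamma$ on $\mathcal{X}$ agrees with $R\Gamma$ on $X$ and on $X'$ for the generating equivariant sheaves lying in the window, the superpotential entering only through the (fixed) differentials on the $\mathcal{H}om$-complexes. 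Essential surjectivity reduces to a Beilinson-type statement, resolving an arbitrary brane by branes built from the window generators, using twists of the Koszul complexes of the two unstable loci exactly as in the $W=0$ sketch; the presence of $W$ does not obstruct this because the Koszul differentials commute with it.

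The genuinely hard parts — and the reason the statement is a conjecture rather than a theorem — are two. First, not every flop is a \emph{global} variation of GIT, so for the general conjecture the window argument is not available and one needs a substitute: presumably a Fourier--Mukai-type kernel supported on a fibre product $X\times_{X_0}X'$, equipped with a lift of the superpotential to the correspondence, defining the desired quasi-equivalence — and, as the paper itself notes, a workable theory of Fourier--Mukai transforms for LG B-models does not yet exist. Second, even granting an equivalence of homotopy categories one must promote it to a quasi-equivalence of dg-categories with the R-charge grading intact, which is cleanest precisely when the functor is given by a kernel, so that compatibility with $R\Gamma$ and with composition is automatic rather than checked by hand. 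I expect the first of these — dispensing with the GIT hypothesis — to be the main obstacle, since it is exactly the gap between Theorem~\ref{GITequivalence} and the full conjecture.
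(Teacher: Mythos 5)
The statement you are attacking is stated in the paper as a \emph{conjecture}, and the paper does not prove it: it only establishes the special case in which $X$ and $X'$ are the two GIT quotients of a vector space by a one-dimensional torus (Theorem \ref{GITequivalence}). Your proposal is accordingly a strategy sketch rather than a proof, and to your credit you identify the main obstructions yourself. Still, be precise about where the gaps are. First, the reduction to flops is not available in the generality you need. The spaces occurring in these LG B-models are typically non-proper (total spaces of orbi-bundles over weighted projective stacks, or quotient stacks), so the standard arguments that a birational map between Calabi--Yau's is an isomorphism in codimension one, and that such maps factor into a finite chain of flops (Kawamata, and in dimension three Bridgeland), do not apply off the shelf; on top of that you would need every intermediate model in the chain to carry a compatible $\C^*_R$-action and a compatible superpotential, for which you offer only ``one should check''. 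Second, even the single-flop case is not settled by the paper's method: Theorem \ref{GITequivalence} treats only flops realised \emph{globally} as a variation of GIT for a linear $\C^*$-action on a vector space. The window argument depends on very specific inputs --- every equivariant bundle on $[V/\C^*_G]$ splits into character line bundles, the cohomology comparison of Corollary \ref{wpscor}, and the equivariant Euler/Koszul resolutions --- none of which is available for a general flop, even one that is locally a VGIT. So the constructive part of your argument establishes nothing beyond what Theorem \ref{GITequivalence} already gives.

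In short, there is no proof in the paper to compare against, and your proposal does not close the conjecture: its honest content is the correct identification of the missing technology (a Fourier--Mukai formalism for LG B-models, and a window construction not tied to a global linear VGIT presentation), together with a reduction step that itself rests on unproved equivariant and non-proper versions of minimal model results. That is a reasonable research plan, not a proof.
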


In the next section we prove a special case of this conjecture.

As was explained in the introduction, this is a conservative version of the real conjecture, which is that the B-models associated to $(X,W)$ and $(X',W')$ are equivalent. We state this version since it is not yet proved that the B-model exists.

\pagebreak

\section{Quotients of a vector space by $\C^*$} \label{quotients}

Take a vector space $V$, and equip it with a linear action of $\C^*$, which we'll denote by $\C^*_G$ (the \quotes{gauge group}). We require that $\C^*_G$ acts through $SL(V)$. We have a stack quotient
$$\mathcal{X} = [V / \C^*_G]$$
There are also two possible GIT quotients of $V$ by $\C^*_G$ associated to the characters $\pm 1$ of $\C^*_G$. From the stacky point of view these are open sub-stacks  $$\iota_\pm: X_\pm= [V^{ss}_{\pm} / \C^*_G] \into \mathcal{X}$$ consisting of the semi-stable loci given by either character. All of these spaces are Calabi-Yau.

Now choose an action of $\C^*_R$ on $V$ that commutes with the gauge-group action. Note that both GIT quotients are then preserved by $\C^*_R$.
Let $W$ be a function on $V$ that is invariant with respect to $\C^*_G$ and has R-charge 2. Then we have three Landau-Ginzburg B-models
\begin{equation}\label{3LG}( X_+, \iota^*_+ W)\stackrel{\iota_+}{\into} (\mathcal{X}, W)\stackrel{\iota_-}{\hookleftarrow}
( X_-, \iota^*_-W) \end{equation}

From now on we'll abuse notation and call both $\iota^*_+W$ and $\iota^*_-W$
just $W$.

Both GIT quotients are the total space of orbi-vector bundles
over weighted projective space. To see this, let 
$$V=V_x \oplus V_y \oplus V_z$$
be the decomposition of $V$ into eigenspaces with positive, negative and
zero $\C^*_G$ weights. Then  $X_+$ projects down to $\mathbb{P}V_x$, and
it is the total space of the vector bundle associated to the graded vector
space $V_y\oplus V_z$. Similarly  $X_-$ is the total space of $V_x\oplus V_z$ over $\mathbb{P}V_y$. 

For our sign conventions, it is simplest if we agree that $\P V_y$ is Proj of a \textit{negatively} graded ring, so that the $\cO(-1)$ line bundle on $\P V_y$ is the one that has global sections. If we don't adopt this then whenever we restrict to $X_-$ we have to flip the signs of all line-bundles.

Let $d$ be the sum of the positive eigenvalues of $\C^*_G$ on $V$, 
since $\C^*_G$ acts through $SL(V)$ the sum of the negative eigenvalues is $-d$.

 We'll make repeated use of  the following fairly classical fact:

\begin{lem} \label{wps}\cite{dolgachev} 
\begin{equation*} H^p_{\mathbb{P}V_x}(\cO(k)) = \left\{ \begin{array}{cc} (\cO_{V_x})_k &
p=0,\;\; k\geq 0  \\
 (\cO_{V_x})_{d-k} & p=\mbox{dim} \mathbb{P}V_x, \;\; k\leq -d \\
 0 & \mbox{otherwise}

\end{array}\right.
\end{equation*} 
where $(\cO_{V_x})_k$ is the polynomials on $V_x$ with $\C^*_G$-degree $k$.
\end{lem}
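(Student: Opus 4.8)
This is the classical computation of the cohomology of line bundles on a weighted projective space, so in principle the citation to Dolgachev suffices; here is how I would reconstruct it in the present language. The first step is to pass to the quotient-stack picture: $\mathbb{P}V_x = [(V_x\setminus 0)/\C^*_G]$, with $\cO(k)$ the line bundle attached to the weight-$k$ character of $\C^*_G$. Since $\C^*_G$ is linearly reductive over $\C$, the invariants functor is exact, so pushing forward from the stack is just ``take the weight-$k$ part''; hence $H^p_{\mathbb{P}V_x}(\cO(k))$ is the $\C^*_G$-weight-$k$ graded piece of $H^p(V_x\setminus 0,\cO)$, and the whole statement reduces to computing the graded $\cO_{V_x}$-module $H^\bullet(V_x\setminus 0,\cO)$. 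For that I would invoke the excision long exact sequence
$$\cdots \to H^p_{\mathfrak{m}}(\cO_{V_x}) \to H^p(V_x,\cO) \to H^p(V_x\setminus 0,\cO) \to H^{p+1}_{\mathfrak{m}}(\cO_{V_x}) \to \cdots$$
(equivalently, the \v{C}ech complex of $V_x\setminus 0$ for the cover by the basic opens $\set{x_i\neq 0}$, where $x_i$ is a basis of $\C^*_G$-eigenvectors with weights $a_i$).

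Now $V_x\cong\C^{r}$ with $r=\dim V_x$ is affine, so $H^0(V_x,\cO)=\cO_{V_x}$ and $H^{>0}(V_x,\cO)=0$. Since $\cO_{V_x}$ is a polynomial ring, it is Cohen--Macaulay of dimension $r$, so $H^i_{\mathfrak{m}}(\cO_{V_x})$ vanishes except for $i=r$; in particular $H^0_{\mathfrak{m}}=H^1_{\mathfrak{m}}=0$, and the sequence gives $H^0(V_x\setminus 0,\cO)=\cO_{V_x}$ (assuming $r\geq 2$; the cases $r\leq 1$ are degenerate, with $\dim\mathbb{P}V_x=0$, and should be checked directly), whose weight-$k$ part is $(\cO_{V_x})_k$ — this vanishes for $k<0$, proving the $p=0$ row. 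For $p\geq 1$ the sequence gives $H^p(V_x\setminus 0,\cO)\cong H^{p+1}_{\mathfrak{m}}(\cO_{V_x})$, which is therefore zero unless $p=r-1=\dim\mathbb{P}V_x$. This already accounts for the ``$0$ otherwise'' part.

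It remains to identify $H^{r-1}(V_x\setminus 0,\cO)\cong H^r_{\mathfrak{m}}(\cO_{V_x})$ as a graded vector space. Concretely this is the cokernel of $\bigoplus_i \cO_{V_x}[(x_1\cdots\widehat{x_i}\cdots x_r)^{-1}]\to \cO_{V_x}[(x_1\cdots x_r)^{-1}]$, which has a basis given by the Laurent monomials $x_1^{m_1}\cdots x_r^{m_r}$ with every $m_i<0$; the substitution $m_i=-1-n_i$, $n_i\geq 0$, puts these in degree-reversing bijection with the monomials of $\cO_{V_x}$, up to the overall shift by $-\sum_i a_i=-d$. Hence this piece is nonzero exactly for $k\leq -d$ and in that range is identified with the appropriate graded piece of $\cO_{V_x}$ (equivalently: graded local duality together with $\omega_{\cO_{V_x}}\cong\cO_{V_x}(-d)$, since the sum of the weights is $d$). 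I expect the only real work here to be the careful bookkeeping of these grading conventions — there is no conceptual obstacle, the lemma being genuinely classical.
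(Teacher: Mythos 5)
The paper gives no proof of this lemma at all --- it is quoted as a classical fact with only the citation to Dolgachev --- so there is no internal argument to compare against; your reconstruction via the punctured affine cone is the standard route and is essentially correct. Reducing to the $\C^*_G$-weight-$k$ piece of $H^\bullet(V_x\setminus 0,\cO)$ is legitimate (the quotient map is affine and extracting a weight space is exact), and the local cohomology sequence plus Cohen--Macaulayness of the polynomial ring gives exactly the claimed concentration in degrees $0$ and $r-1=\dim\P V_x$, where $r=\dim V_x$.

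The one place you should not have hedged is the identification of the top graded piece, because it is precisely there that the bookkeeping matters. Your own computation --- the Laurent monomial $x_1^{-1-n_1}\cdots x_r^{-1-n_r}$ has $\C^*_G$-weight $-d-\sum_i a_in_i$ --- identifies the weight-$k$ part of $H^r_{\mathfrak m}(\cO_{V_x})\cong H^{r-1}(V_x\setminus 0,\cO)$ with $(\cO_{V_x})_{-d-k}$ (or its dual, via local duality with $\omega\cong\cO_{V_x}(-d)$), \emph{not} with $(\cO_{V_x})_{d-k}$ as printed in the lemma. These have different dimensions in general: already for $\P^1$ and $k=-2$ one has $\dim(\cO_{V_x})_{d-k}=\dim(\cO_{V_x})_4=5$, whereas $h^1(\cO_{\P^1}(-2))=1=\dim(\cO_{V_x})_{-d-k}$. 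So the printed index is a typo, and this is confirmed by the paper's own later use of the lemma in Corollary \ref{sphericalobject}, where $H^{\mathrm{top}}_{\P V_x}(\cO(-d))=\C$ forces the index $-d-k=0$ rather than $d-k=2d$. You should commit to $(\cO_{V_x})_{-d-k}$ and flag the discrepancy with the statement rather than writing ``the appropriate graded piece.'' Beyond that the only loose ends are the degenerate cases $r\le 1$, which you correctly note must be checked separately, and the fact that the identification is naturally with a \emph{dual} graded piece --- irrelevant for the $\C^*_G$-count but relevant if one tracks the residual $\C^*_R$-charge, as the paper implicitly does later.
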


\begin{cor} \label{wpscor}
$$ H^0_{X_+}(\cO(k)) = (\cO_V)_k $$
for all $k$, and 
$$H^p_{X_+}(\cO(k)) = 0$$
for $p>0$ and $k>-d$.
\end{cor}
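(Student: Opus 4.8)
The plan is to reduce everything to Lemma \ref{wps} by pushing down to the weighted projective stack $\P V_x$, over which, as recalled in the excerpt, $X_+$ is the total space of the orbi-vector bundle $\mathcal E$ with fibre $V_y\oplus V_z$. For the statement about $H^0$ the quickest route avoids the projection entirely: $H^0_{X_+}(\cO(k))$ is the $\C^*_G$-weight-$k$ part of $\Gamma(V^{ss}_+,\cO)$, and $V^{ss}_+=V\setminus(V_y\oplus V_z)$ is the complement of a linear subspace of codimension $\dim V_x$. Since $V$ is smooth, hence normal, restriction gives an isomorphism $\C[V]=\Gamma(V,\cO)\iso\Gamma(V^{ss}_+,\cO)$ as soon as $\dim V_x\ge 2$, and taking weight-$k$ parts yields $H^0_{X_+}(\cO(k))=(\C[V])_k=(\cO_V)_k$ for every $k$.

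For the higher vanishing I would use the projection $\pi\colon X_+\to\P V_x$. It is affine, being the bundle projection of $\mathcal E$, so $R^q\pi_*=0$ for $q>0$; and $\cO_{X_+}(k)=\pi^*\cO_{\P V_x}(k)$, so $H^p_{X_+}(\cO(k))=H^p_{\P V_x}(\cO(k)\otimes\pi_*\cO_{X_+})$. Now $\pi_*\cO_{X_+}=\mathrm{Sym}^\bullet(\mathcal E^\vee)$, and since $\C^*_G$ acts with negative weights on $V_y$ and weight zero on $V_z$, the fibre-degree-$(a,b)$ part of $\pi_*\cO_{X_+}$ is the line bundle $\cO_{\P V_x}(e_a)$ with $e_a=-(\text{weight of }y^a)\ge 0$. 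Hence $\pi_*\cO_{X_+}=\bigoplus_{a,b}\cO_{\P V_x}(e_a)$ with all $e_a\ge 0$, so
$$H^p_{X_+}(\cO(k))=\bigoplus_{a,b}H^p_{\P V_x}(\cO(k+e_a)).$$
By Lemma \ref{wps} the group $H^p_{\P V_x}(\cO(l))$ vanishes for $p>0$ unless $l\le -d$; since every $e_a\ge 0$, the $(a,b)$-summand vanishes whenever $k>-d$, which gives $H^p_{X_+}(\cO(k))=0$ for $p>0$, $k>-d$. (If $\dim\P V_x=0$ there is nothing to check.) The $p=0$ clause of the same lemma, together with matching monomial bases (a monomial $x^c y^a z^b$ of total $\C^*_G$-degree $k$ corresponds to $x^c\in(\cO_{V_x})_{k+e_a}$), re-derives the $H^0$ formula when $\dim\P V_x\ge 1$.

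The one genuine subtlety, I expect, is this dimension hypothesis rather than anything deep. When $\dim V_x=1$ one has $\P V_x=B\mu_d$, Lemma \ref{wps} then contributes extra classes to $H^0$ in the range $l\le -d$, and $H^0_{X_+}(\cO(k))$ becomes strictly larger than $(\cO_V)_k$ for $k\ll 0$ — e.g. the constant function is a global section of $\cO(-d)$ on $X_+$ but lies in $(\cO_V)_0$, not $(\cO_V)_{-d}$ — so the first assertion of the corollary should be read under $\dim V_x\ge 2$ (symmetrically $\dim V_y\ge 2$ for the analogue on $X_-$), while the higher vanishing needs no restriction. The only other thing to be careful about is purely bookkeeping: pinning down the decomposition $\pi_*\cO_{X_+}=\bigoplus\cO_{\P V_x}(e_a)$ with the correct signs, which is forced once one adopts the convention, already implicit in Lemma \ref{wps}, that $H^0_{\P V_x}(\cO(k))$ is the space of $\C^*_G$-weight-$k$ polynomials on $V_x$.
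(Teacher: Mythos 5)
Your proof is correct and is essentially the paper's own argument, which consists of the single observation that the projection $X_+\to\P V_x$ is affine with pushforward $S^\bullet(V_y\oplus V_z)^\vee$, so that everything reduces to Lemma \ref{wps}; your Hartogs-style derivation of the $H^0$ statement is a harmless alternative route to the same conclusion. Your caveat about $\dim V_x=1$ is a fair catch --- in that degenerate case the second clause of Lemma \ref{wps} also contributes to $H^0$ and the first assertion of the corollary fails for $k\le -d$ --- but since every summand $e_a\ge 0$ the identity $H^0_{X_+}(\cO(k))=(\cO_V)_k$ still holds in the range $k>-d$, which is the only range used in Lemmas \ref{qff} and \ref{qes}.
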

\begin{proof} By adjunction and affineness of the projection $X_+ \to \mathbb{P}V_x$, we have
$$H^p_{X_+}(\cO(k)) = H^p_{\mathbb{P}V_x}(S^\bullet(V_y\oplus V_z)^\vee\otimes \cO(k))$$
\end{proof}

\subsection{Quasi-equivalences}\label{quasiequivalences}

In this section we will prove
\begin{thm}\label{GITequivalence} There is a natural set of quasi-equivalences
$$Br(X_+, W) \cong Br(X_-, W)$$
parametrised by $\mathbb{Z}$.
\end{thm}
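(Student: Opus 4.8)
The strategy is to imitate the sketch proof for $W=0$ given in Section \ref{sketch}, upgraded to the Landau–Ginzburg setting. For each $t\in\Z$ I would introduce a "window" subcategory $\mathcal{G}_t\subset Br(\mathcal{X},W)$ of B-branes on the Artin stack quotient $(\mathcal{X},W)$, defined as those branes built (as equivariant bundles) from the line bundles $\cO(t),\dots,\cO(t+d-1)$, where $d$ is the sum of the positive $\C^*_G$-weights. One then has restriction dg-functors $\iota_\pm^*: Br(\mathcal{X},W)\to Br(X_\pm,W)$ coming from the open immersions $\iota_\pm$, and the claim reduces to showing that $\iota_+^*$ and $\iota_-^*$ each restrict to quasi-equivalences $Br(X_+,W)\xleftarrow{\ \sim\ }\mathcal{G}_t\xrightarrow{\ \sim\ }Br(X_-,W)$. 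Composing one with the inverse of the other produces the desired equivalence $Br(X_+,W)\simeq Br(X_-,W)$, giving a $\Z$-indexed family.

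The proof then splits into the two usual halves: full faithfulness and essential surjectivity (up to quasi-isomorphism). For \emph{full faithfulness} of $\iota_\pm^*$ on $\mathcal{G}_t$, since the Hom-complexes in $Br$ are $R\Gamma$ of a $\mathcal{H}om$-bundle whose underlying graded bundle depends only on the underlying equivariant bundles, and since $d_{E,F}$ is built from the same data on $\mathcal{X}$ and on $X_\pm$, it suffices to check that $R\Gamma_{\mathcal{X}}$ and $R\Gamma_{X_\pm}$ agree on the relevant line bundles $\cO(i)$ — here $i$ ranges over the differences of window indices, i.e. $i\in[-(d-1),d-1]$ — in the sense that restriction induces a quasi-isomorphism. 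This is exactly the cohomology computation of Lemma \ref{wps} and Corollary \ref{wpscor}: one verifies $H^\bullet_{\mathcal{X}}(\cO(i)) = H^\bullet_{X_+}(\cO(i)) = H^\bullet_{X_-}(\cO(i))$ for $i$ in this range (using that $\C^*_G$ acts through $SL(V)$, so the range is symmetric and the higher cohomology on $X_\pm$ vanishes for $k>-d$). Because $R\Gamma$ is monoidal, this isomorphism on generators propagates to an isomorphism on all Hom-complexes of branes assembled from these line bundles. For \emph{essential surjectivity}, I would show the line bundles $\cO(t),\dots,\cO(t+d-1)$ split-generate $Br(X_\pm,W)$: first that $\{\cO(i):i\in\Z\}$ generates (using quasi-projectivity / the fact that an arbitrary B-brane's underlying bundle is resolved by sums of $\cO(i)$'s, with the curved differential handled by a filtration/spectral-sequence argument as in Remark \ref{spectralsequence}), and then that the Koszul-type exact sequences $\iota_\pm K_\pm(j)$ — which on $\mathcal{X}$ come from the Koszul resolutions $K_\pm$ of the unstable loci and become exact after restriction to $X_\pm$ — allow one to replace $\cO(i)$ for $i$ outside the window by a complex of $\cO(j)$'s inside the window, one step at a time. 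One must check these reduction sequences are compatible with the curved differential $d_E$, which again uses that $W$ is a global function pulled back from $\mathcal{X}$.

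The main obstacle, and the reason "the last two sections are rather messy and ad-hoc," is that $Br(X,W)$ is not obviously pre-triangulated (Remark after Example \ref{perf}): the window category $\mathcal{G}_t$ must be defined with care so that it is closed under the operations (twisted cones / convolutions) needed to run both the full-faithfulness bookkeeping and the generation argument, and one cannot simply invoke a triangulated-category generation statement. Concretely, the delicate point is organizing the "resolve $\cO(i)$ into the window" step at the \emph{dg} level — producing explicit totalizations of the Koszul sequences, compatible with $d_E$ and with $\bar\partial$, and checking that restriction along $\iota_\pm$ sends quasi-isomorphisms to quasi-isomorphisms — rather than working only with objects as in \cite{HHP}. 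I expect the cohomology vanishing (Corollary \ref{wpscor}) to be the easy input and this homotopy-coherent bookkeeping to absorb most of the work.
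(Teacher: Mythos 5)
Your proposal follows the paper's proof essentially verbatim: the same window subcategories $\mathcal{G}_t\subset Br(\mathcal{X},W)$ generated by $\cO(t),\dots,\cO(t+d-1)$, quasi-full-faithfulness via the spectral sequence of Remark \ref{spectralsequence} together with the cohomology match of Corollary \ref{wpscor} on the range $[1-d,d-1]$, and quasi-essential-surjectivity via the Beilinson/Euler-sequence resolution into the window. The ``homotopy-coherent bookkeeping'' you correctly flag as the main work is handled in the paper by an explicit iterative (homological perturbation) construction of a curved differential $d_{\mathcal{E}}=\partial_{\mathcal{E}}+D_0+D_{-1}+\cdots$ on the resolution, followed by a filtration/spectral-sequence argument showing the perturbed brane is homotopy equivalent to the original.
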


The key idea of the proof of this Theorem comes from \cite{HHP}.
Using restriction functors shown in \ref{3LG}, we will identify both $Br(X_+, W)$ and $Br(X_-, W)$
with one of a set of full subcategories $\mathcal{G}_t \subset Br(\mathcal{X},W)$ parameterized by $t\in \Z$.

 Note that every vector bundle on $\mathcal{X}$ is a direct sum of the obvious
 line bundles  $\mathcal{O}(k), k\in \mathbb{Z}$. Let
$$\mathcal{G}_t \subset Br(\mathcal{X}, W)$$
be the full subcategory consisting of B-branes $(E, d_E)$ where all the summands
of $E$ come from the set
$$\mathcal{O}(t),...,\mathcal{O}(t+d-1)$$
We will show that the functors $$\iota^*_\pm  : Br(\mathcal{X}, W) \to Br(X_\pm,  W)$$
become quasi-equivalences when restricted to any of the subcategories $\mathcal{G}_t$, thus proving Theorem \ref{GITequivalence}.

Recall that a dg-functor between dg-categories is a quasi-equivalence if the induced map on homotopy categories is an equivalence. This means that it must be a quasi-isomorphism on Hom sets (quasi-fully-faithful) and surjective on homotopy-equivalence classes of objects (quasi-essentially-surjective). 

\begin{lem}\label{qff} For any $t\in\Z$, both functors 
$$\iota^*_\pm  : \mathcal{G}_t \to Br(X_\pm,  W)$$
are quasi-fully-faithful.
\end{lem}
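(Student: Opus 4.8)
The plan is to reduce the statement to a computation on the stack $\mathcal{X}$, exactly as in the $W=0$ sketch, and then control the $\C^*_R$-equivariant sheaf cohomology that arises. Since $\iota^*_\pm$ are dg-functors and both $\mathcal{G}_t$ and $Br(X_\pm,W)$ are $\mathbf{dg_R Vect}$-enriched, it suffices to check that on each pair of objects $(E,d_E),(F,d_F)$ the map
$$R\Gamma_{\mathcal{X}}\bigl(\mathcal{H}om(E,F),d_{E,F}\bigr)\longrightarrow R\Gamma_{X_\pm}\bigl(\iota_\pm^*\mathcal{H}om(E,F),\iota_\pm^*d_{E,F}\bigr)$$
is a quasi-isomorphism of complexes. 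Because $\mathcal{H}om(E,F)=E^\vee\otimes F$ and $E,F$ are built from the line bundles $\cO(t),\dots,\cO(t+d-1)$, the underlying equivariant bundle $\mathcal{H}om(E,F)$ is a direct sum of copies of $\cO(i)$ with $i$ ranging over $[-(d-1),d-1]$. So I would first establish the $W=0$, non-differentiated statement: for $i\in[1-d,d-1]$ the restriction maps
$$H^\bullet_{\mathcal{X}}(\cO(i))\longrightarrow H^\bullet_{X_\pm}(\cO(i))$$
are isomorphisms of $\C^*_R$-representations. For $X_+$ this follows from Corollary \ref{wpscor}: in this range $k=i>-d$, so higher cohomology vanishes on $X_+$, and $H^0_{X_+}(\cO(i))=(\cO_V)_i$; on the stack $H^\bullet_{\mathcal{X}}(\cO(i))$ is the $\C^*_G$-weight-$i$ part of $\cO_V$ in degree $0$ and vanishes in higher degrees, so the two agree (the restriction map is visibly the identity on global sections). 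The case of $X_-$ is symmetric after the sign flip discussed before Lemma \ref{wps} (sum of negative weights is $-d$), using Lemma \ref{wps} applied to $\mathbb{P}V_y$. Crucially these isomorphisms are compatible with the $\C^*_R$-action since everything in sight is $\C^*_R$-equivariant and the decompositions are canonical.

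Next I would upgrade from bundles to B-branes by a spectral-sequence argument, using Remark \ref{spectralsequence}. Both $R\Gamma_{\mathcal{X}}(\mathcal{H}om(E,F))$ and $R\Gamma_{X_\pm}(\iota_\pm^*\mathcal{H}om(E,F))$ are bicomplexes (Dolbeault degree against R-charge), with total differential $\bar\partial+d_{E,F}$, and the Dolbeault grading is bounded, so both admit convergent spectral sequences whose $E_1$-pages are $(H^\bullet_{\mathcal{X}}(\mathcal{H}om(E,F)),d_{E,F})$ and $(H^\bullet_{X_\pm}(\iota_\pm^*\mathcal{H}om(E,F)),\iota_\pm^*d_{E,F})$ respectively. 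The restriction functor induces a morphism of bicomplexes, hence a morphism of spectral sequences, and by the previous paragraph it is an isomorphism on $E_1$ (the differential $d_{E,F}$ is built from the $\C^*_G$-invariant maps $d_E,d_F$, which are just polynomial matrices, and these are the same polynomials whether computed on $\mathcal{X}$ or restricted to $X_\pm$ — the point is that restriction is an iso on $H^0$ and everything is a sum of $\cO(i)$'s with $i$ in the good range). An isomorphism on $E_1$ of convergent spectral sequences gives a quasi-isomorphism on totalisations, which is what we want.

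The main obstacle is bookkeeping rather than conceptual: one must check carefully that the relevant twists $\cO(i)$ that appear in $\mathcal{H}om(E,F)$ really do all lie in the range $[1-d,d-1]$ where the cohomology-vanishing holds — this is exactly why the window has width $d$ rather than something smaller, and it is the ``grade restriction rule'' doing its work. I would make this explicit: if $E,F$ have summands among $\cO(t),\dots,\cO(t+d-1)$ then $E^\vee\otimes F$ has summands $\cO(j-k)$ with $j,k\in\{0,\dots,d-1\}$, so $j-k\in\{-(d-1),\dots,d-1\}$, and every such twist satisfies $j-k>-d$, so Corollary \ref{wpscor} applies on $X_+$ (and its mirror on $X_-$). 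One should also note the edge case $V_x$ or $V_y$ of dimension zero, which is degenerate but harmless. Beyond that, the only thing to be careful about is that $R\Gamma$ was fixed once and for all as a monoidal functor and that the $R\Gamma_{\mathcal{X}}$ and $R\Gamma_{X_\pm}$ are chosen compatibly (via Dolbeault resolutions and the open inclusion $\iota_\pm$), so that restriction genuinely is a map of complexes; this is exactly the compatibility hypothesis under which $\iota_\pm^*$ was defined to be a dg-functor, so no extra work is needed there.
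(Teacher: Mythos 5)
Your proposal is correct and follows essentially the same route as the paper: reduce to the map of Hom-complexes, use the spectral sequence of Remark \ref{spectralsequence}, and note that $\mathcal{H}om(E,F)$ is a sum of $\cO(i)$ with $i\in[1-d,d-1]$, where Corollary \ref{wpscor} makes restriction an isomorphism on the first page. Your extra bookkeeping (the explicit range of twists, the $X_-$ case by symmetry, compatibility of the $R\Gamma$'s) only makes explicit what the paper leaves implicit.
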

\begin{proof}
Obviously we need only show the proof for $\iota^*_+$. Let $(E, d_E)$ and $(F, d_F)$ be any two B-branes in $\mathcal{G}_t$. We get corresponding B-branes $\iota^*_+(E, d_E)$ and $\iota^*_+(F, d_F)$ on $X_+$. Then 
$$\Hom_{Br(\mathcal{X},W)}((E, d_E), (F, d_F)) = R\Gamma_\mathcal{X}(\mathcal{H}om(E,F))$$
and
$$\Hom_{Br(X_+,W)}(\iota_+^*(E, d_E), \iota_+^*(F, d_F)) = R\Gamma_{X_+}(\iota_+^*\mathcal{H}om(E,F))$$
We wish to show that the map $\iota^*_+$ is a quasi-isomorphism between these two complexes.
Recall (Remark \ref{spectralsequence}) that the homology of both complexes can be computed by spectral sequences whose first pages are
$$(H^\bullet_{\mathcal{X}}(\mathcal{H}om(E,F)), d_{E,F}) \;\;\; \mbox{and}\;\;\;(H^\bullet_{X_+}(\iota_+^*\mathcal{H}om(E,F)), \iota^*_+d_{E,F})$$
On $\mathcal{X}$, taking global sections just means taking $\C^*_G$-invariants, which is exact, so for any line-bundle $\cO(k)$,
$$H^\bullet_{\mathcal{X}} (\mathcal{O}(k)) = H^0_{\mathcal{X}}(\cO(k))= (\mathcal{O}_V)_k$$
and by Corollary \ref{wpscor} this is also true on $X_+$ when $k>-d$. Since  $\mathcal{H}om(E,F)$ is a direct sum of line-bundles from the set
$$ \mathcal{O}(1-d),...,\mathcal{O}(d-1)$$
the induced map
$$\iota^*_+: H^\bullet_{\mathcal{X}}(\mathcal{H}om(E,F)) \to H^\bullet_{X_+}(\iota_+^*\mathcal{H}om(E,F))$$ is an isomorphism between the first pages of the two spectral sequences. Hence $\iota^*_+$ is a quasi-isomorphism.
\end{proof}

We will deduce quasi-essential-surjectivity from the following lemma, which
is essentially Beilinson's Theorem \cite{beilinson}.

\begin{lem}\label{resolution} For any $t\in\Z$, any $\C^*_R$-equivariant vector bundle $E$ on $X_+$ has a finite $\C^*_R$-equivariant resolution by direct sums of shifts of line-bundles from the set
$$\mathcal{O}(t),..., \mathcal{O}(t+d-1)$$
\end{lem}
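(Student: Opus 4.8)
The plan is to reduce to the case of a single line bundle $E=\cO(i)$ and then to resolve $\cO(i)$ inductively, pushing its twist into the window $[t,t+d-1]$ by means of a Koszul complex — this is Beilinson's theorem in disguise. One should first observe that ``resolution'' here must mean ``quasi-isomorphism with a bounded $\C^*_R$-equivariant complex whose terms are finite direct sums of shifts of $\cO(t),\dots,\cO(t+d-1)$'': an honest one-sided resolution cannot exist, since by Corollary \ref{wpscor} there are, for instance, no nonzero maps $\cO(t)\to\cO(t-1)$ on $X_+$, so $\cO(t-1)$ admits no surjection from a sum of window bundles. Equivalently, we must show $E$ lies in the smallest triangulated subcategory of $D^b(X_+)$ containing $\cO(t),\dots,\cO(t+d-1)$, and then lift back to the dg-level, which is automatic since everything in sight is a genuine complex of vector bundles.

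The first reduction is to line bundles. Since $X_+=[V^{ss}_+/\C^*_G]$ with $V^{ss}_+\subset V$ open, any $\C^*_R$-equivariant vector bundle $E$ on $X_+$ is the restriction of some $\C^*_G\times\C^*_R$-equivariant coherent sheaf $\bar E$ on $V$; as $V$ is smooth affine and $\C^*_G\times\C^*_R$ is linearly reductive, $\bar E$ admits a finite equivariant free resolution, whose terms are direct sums of characters, i.e.\ of shifts of the $\cO(k)$. Restricting to $V^{ss}_+$ and descending to $X_+$ exhibits $E$ as quasi-isomorphic to a bounded $\C^*_R$-equivariant complex of finite direct sums of shifts of $\cO(k)$, with $k$ ranging over a finite set. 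So it suffices to treat $E=\cO(i)$, $i\in\Z$.

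The key tool is the Koszul complex. Pick a $\C^*_G$-weight basis $x_1,\dots,x_r$ of $V_x^\vee$, with weights $a_1,\dots,a_r>0$ summing to $d$; these cut out the unstable locus $Z_+=\{v_x=0\}\subset V$ of $X_+$ as a regular sequence, so the Koszul complex $K_+$ of $\cO_{Z_+}$ is a $\C^*_G\times\C^*_R$-equivariant resolution on $\mathcal{X}$ whose $p$-th term $\cO\otimes\Lambda^pV_x^\vee$ is a sum of line bundles $\cO(-m)$ with $m=\sum_{i\in S}a_i$, $|S|=p$ — the extreme terms being $\cO$ and $\cO(-d)$, and all others having $0<m<d$. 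Since $Z_+\cap X_+=\emptyset$, both $\iota_+^*K_+$ and its dual $(\iota_+^*K_+)^\vee$ are exact $\C^*_R$-equivariant complexes of vector bundles on $X_+$. Twisting by $\cO(j)$ gives, for every $j$, exact $\C^*_R$-equivariant complexes on $X_+$
$$0\to\cO(j-d)\to\cdots\to\cO(j)\to 0,\qquad 0\to\cO(j)\to\cdots\to\cO(j+d)\to 0,$$
in which each term other than the two displayed extremes is a sum of shifts of $\cO(k)$ with $j-d<k<j$, respectively $j<k<j+d$. Read as iterated exact triangles, these say $\cO(j)$ lies in the triangulated subcategory generated by $\{\cO(j-d),\dots,\cO(j-1)\}$, and also in the one generated by $\{\cO(j+1),\dots,\cO(j+d)\}$.

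Finally one inducts. Starting from the complex produced in the first reduction, apply the first relation to each occurring $\cO(k)$ with $k\ge t+d$ — this introduces only $\cO(k')$ with $k'\in[k-d,k-1]\subset[t,k-1]$ — and the second to each $\cO(k)$ with $k\le t-1$ — introducing only $\cO(k')$ with $k'\in[k+1,k+d]\subset[k+1,t+d-1]$. Each such move strictly decreases $\max(0,b-(t+d-1))+\max(0,t-a)$, where $[a,b]$ is the range of twists appearing, so after finitely many steps all twists lie in $[t,t+d-1]$, and the result is quasi-isomorphic to $E$ as a $\C^*_R$-equivariant complex. The mathematical content is concentrated in the exactness and precise term-range of $\iota_+^*K_+$ on $X_+$; the main obstacle, such as it is, is purely organizational — carrying the $\C^*_R$-equivariance through (routine, since $\C^*_R$ commutes with $\C^*_G$ and hence preserves the decomposition $V=V_x\oplus V_y\oplus V_z$) and bookkeeping the double induction — together with the conceptual point, flagged at the outset, that the ``resolution'' here is necessarily two-sided.
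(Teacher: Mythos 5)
Your proof is correct and follows essentially the same route as the paper's: reduce to line bundles via a finite equivariant resolution over the ambient affine space/stack, then use the Koszul complex of the unstable locus $\{v_x=0\}$ --- which is precisely the pulled-up Euler sequence $(\bigwedge^\bullet V_x^\vee,\neg x)$ on $\P V_x$ that the paper uses --- read in both directions to shuffle every twist into the window $[t,t+d-1]$. One small correction to your (non-load-bearing) aside: Corollary \ref{wpscor} gives $H^0_{X_+}(\cO(-1))=(\cO_V)_{-1}$, which is generally \emph{nonzero} (e.g.\ whenever $V_y$ has a weight $-1$ summand), so there can be nonzero maps $\cO(t)\to\cO(t-1)$; the correct reason a one-sided resolution can fail is that every function of negative $\C^*_G$-weight lies in the ideal of the nonempty locus $\{y=0\}\subset X_+$, so no map from a sum of window bundles onto $\cO(t-1)$ is surjective --- your conclusion that ``resolution'' must be read as a two-sided quasi-isomorphism stands, and is a point the paper glosses over.
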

\begin{proof}
Recall that all vector bundles on $\mathcal{X}$ are direct sums of the character line bundles. Since $X_+$ is quasi-projective, $E$ is a quotient of $\iota_+^* V$ for some vector bundle $V$ on $\mathcal{X}$, and we can choose this quotient to be $\C^*_R$-equivariant. Then we have a map $V \to \iota_{+*}E$ which is surjective on $X_+$. Since $\mathcal{X}$ is smooth, the kernel of this map has a finite resolution by vector bundles, which we again may choose to be $\C^*_R$-equivariant. The restriction of this resolution to $X_+$, together with $V$, give a finite $\C^*_R$-equivariant resolution of $E$  by direct sums of character line-bundles. Thus it is sufficient to prove the lemma for the line-bundles $\cO(k)$.

 On $\mathbb{P}V_x$ we have the Euler exact sequence
 $$(\bigwedge^\bullet V_x^\vee, \neg x)\;=  [0\to\cO(-d) \to \; ...\; \to  \cO \to 0]$$
which resolves $\cO(-d)$ in terms of $\cO(-d+1),...,\cO$, and the $\C^*_R$-action on $V_x$ means that it is $\C^*_R$-equivariant. Pull this up to $X_+$. By repeatedly using twists of this exact sequence
we see that any line-bundle $\cO(k)$ has a $\C^*_R$-equivariant resolution
by shifts of line bundles from the set $\cO(t),...,\cO(t+d-1)$.

\end{proof}

\begin{lem}\label{qes} For any $t$, both functors 
$$\iota^*_\pm  : \mathcal{G}_t \to Br(X_\pm,  W)$$
are quasi-essentially-surjective.
\end{lem}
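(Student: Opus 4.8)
The plan is to prove directly that every B-brane $(F,d_F)$ on $X_+$ is homotopy-equivalent to $\iota^*_+$ of an object of $\mathcal{G}_t$; the case of $\iota^*_-$ is identical with the roles of $V_x$ and $V_y$ interchanged. This splits into two steps: first replace $(F,d_F)$ by a homotopy-equivalent B-brane whose underlying bundle is a direct sum of shifts of $\mathcal{O}(t),\dots,\mathcal{O}(t+d-1)$, and then observe that such a B-brane automatically comes from $\mathcal{G}_t$.

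The second step is cheap and reuses the idea behind Lemma \ref{qff}. If $(E,d_E)$ is a B-brane on $X_+$ whose underlying bundle is a sum of shifts of $\mathcal{O}(t),\dots,\mathcal{O}(t+d-1)$, then $\mathcal{E}nd(E)$ is a sum of line bundles $\mathcal{O}(k)$ with $|k|\le d-1$, so by Corollary \ref{wpscor} the restriction $H^0_\mathcal{X}(\mathcal{E}nd\, E)\to H^0_{X_+}(\mathcal{E}nd\, E)$ is an isomorphism in every R-charge. Hence $d_E$ is the restriction of a unique endomorphism $\tilde d_E$ of the corresponding bundle $\tilde E$ on $\mathcal{X}$, and $\tilde d_E^2=W\cdot\id$ holds on $\mathcal{X}$ because it holds on the dense open $X_+$ and the restriction is injective. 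Thus $(\tilde E,\tilde d_E)\in\mathcal{G}_t$ and $\iota^*_+(\tilde E,\tilde d_E)=(E,d_E)$.

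For the first step, Lemma \ref{resolution} supplies a finite $\C^*_R$-equivariant resolution $0\to P^{-m}\xrightarrow{\delta}\cdots\xrightarrow{\delta}P^0\xrightarrow{\epsilon}F\to 0$ in which each $P^{-i}$ is a sum of shifts of $\mathcal{O}(t),\dots,\mathcal{O}(t+d-1)$. I would then transport the curved structure of $(F,d_F)$ onto this resolution: put $\hat F=\bigoplus_i P^{-i}$, with the R-charge of each $P^{-i}$ shifted so that $\delta$ has R-charge $1$, and build a differential $d_{\hat F}=\delta+\phi_0+\phi_1+\cdots$ with $d_{\hat F}^2=W\cdot\id$, where $\phi_j$ has R-charge $1$ and shifts the resolution degree by $j$ in the direction opposite to $\delta$. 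Expanding the equation $d_{\hat F}^2=W\cdot\id$ by resolution degree, $\phi_0$ has to be a chain endomorphism of $(P^\bullet,\delta)$ inducing $d_F$ on $F$, and each subsequent $\phi_{j+1}$ is then forced to solve a linear equation $[\delta,\phi_{j+1}]=(\text{an expression in }\delta,W,\phi_0,\dots,\phi_j)$. These are the same kind of lifting problems that were controlled in the proof of Lemma \ref{qff}: the $P^{-i}$ lie in the window, so the relevant cohomology on $X_+$ is accessible through Corollary \ref{wpscor}, the $\phi_j$ can be produced $\C^*_R$-equivariantly, and the recursion terminates because the resolution is finite. The resulting closed degree-zero morphism $\epsilon\colon(\hat F,d_{\hat F})\to(F,d_F)$ should then be a homotopy equivalence in $Br(X_+,W)$ --- not merely a quasi-isomorphism of underlying sheaf complexes --- and that would finish the first step, and the lemma.

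I expect the main obstacle to be exactly this curved transport. One has to check that the recursion for $d_{\hat F}$ genuinely closes off (the successive obstruction terms must vanish, and here one needs more than the bare $H^0$-comparison that sufficed for Lemma \ref{qff}), and one has to verify that the comparison map $\epsilon$ is a true homotopy equivalence once the curvature has been allowed to mix the resolution grading with the R-charge grading. This is the sort of ad-hoc bookkeeping the introduction apologises for; everything else in the proof reduces cleanly to Lemma \ref{resolution} and the cohomology computation of Corollary \ref{wpscor}.
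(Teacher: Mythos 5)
Your strategy is the paper's own: resolve $F$ by window line bundles (Lemma \ref{resolution}), collapse the resolution to a single $\C^*_R$-graded bundle, perturb the differential order by order in the resolution degree until it squares to $W\cdot\id$, lift the result to $\mathcal{G}_t$ via the $H^0$-comparison of Corollary \ref{wpscor}, and finally check that the new brane is homotopy equivalent to the old one. The lifting step is fine as you state it. But the two points you defer as ``the main obstacle'' are exactly where the content of the proof lies, and neither follows from Corollary \ref{wpscor} alone, so as written the argument is incomplete.

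For the recursion to close you need two separate facts. First, each obstruction term (your right-hand side in $[\delta,\phi_{j+1}]=\cdots$, i.e.\ the piece of $(d_{\hat F}^2-W\id)$ in the relevant resolution degree) must be closed under $[\delta,-]$; this is a short graded Jacobi-identity computation using the inductive hypothesis, of the shape $[\delta,(D^2)_{-k}]=[D^2,D]_{-k+1}=0$. Second, a closed element of negative homological (resolution) degree must be exact. This is \emph{not} a consequence of the window condition by itself: it holds because the vanishing of higher Ext's between the window line bundles (Corollary \ref{wpscor}) gives a quasi-isomorphism between $H^0(\mathcal{E}nd\,\hat F)$ with differential $[\delta,-]$ and $\RHom_{X_+}(F,F)$, and the latter has homology only in non-negative degree since $F$ is a vector bundle on a smooth space. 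The same quasi-isomorphism is what produces a closed $\phi_0$ inducing $d_F$, and what shows $\phi_0^2-W\id$ is exact at the first stage. For the final step, you are right that a quasi-isomorphism of underlying complexes is not enough once the curvature mixes the gradings; the paper's device is to filter the dg-algebra $\End_{Br(X_+,W)}\bigl((F,d_F)\oplus(\hat F,d_{\hat F})\bigr)$ by (usual grade) minus (resolution grade). This filtration is bounded, page 1 of its spectral sequence is the $\delta$-homology, which is $\End_{Br(X_+,W)}\bigl((F,d_F)^{\oplus 2}\bigr)$ concentrated in resolution grade zero, so the sequence degenerates at page 2 and the identity of $F$ lifts to mutually inverse closed morphisms up to homotopy. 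Supplying these two arguments would complete your proof along the paper's lines.
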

\begin{proof}
Again we only show the proof for $\iota^*_+$. Let $(E, d_E)$ be a B-brane on $(X_+, W)$. By Lemma \ref{resolution} we can  $\C^*_R$-equivariantly resolve $E$ by a complex
$$\mathcal{E}^{-s} \stackrel{\partial_\mathcal{E}}{\to}... 
 \stackrel{\partial_\mathcal{E}}{\to} \mathcal{E}^{-1}
  \stackrel{\partial_\mathcal{E}}{\to} \mathcal{E}^0 \;\; \stackrel{q}{\onto}
  E$$ 
where every term is a direct sum of shifts of line bundles $\cO(k)$ with $t\leq k\leq  t+d-1$. If we let 
$$\mathcal{E}= \bigoplus_p \mathcal{E}^{-p} [p]$$
then $\partial_\mathcal{E}$ is an endomorphism of $\mathcal{E}$ with R-charge 1.  We're going to show that we can perturb $\partial_\mathcal{E}$
 to an endomorphism $d_\mathcal{E}$
whose square is $W\id_\mathcal{E}$, and that the resulting B-brane 
$(\mathcal{E}, d_\mathcal{E})$ is homotopic
to $(E, d_E)$. To see that this proves the lemma, let $\hat{\mathcal{E}}$ be the vector bundle on $\mathcal{X}$ given by the same direct sum of line-bundles as $\mathcal{E}$. Then 
$$H^0_{X_+}(\mathcal{E}nd(\mathcal{E}))
 = H^0_{\mathcal{X}}(\mathcal{E}nd(\hat{\mathcal{E}}))$$
  (see Corollary \ref{wpscor}),
 so $d_\mathcal{E}$ is the restriction of an endomorphism $d_{\hat{\mathcal{E}}}$ of $\hat{\mathcal{E}}$, so we have a B-brane $(\hat{\mathcal{E}}, d_{\hat{\mathcal{E}}})\in \mathcal{G}_t$ that restricts to give $(\mathcal{E}, d_{\mathcal{E}})$. So every B-brane is homotopic to a B-brane lying in $\iota_+^*\mathcal{G}_t$, which is the statement of the lemma.

   As well as the R-charge, we will need to keep track of the grading
on $\mathcal{E}$ that comes from it being a complex, let's call this
the homological grading. Of course $\partial_\mathcal{E}$ also has homological
grade 1.

Now consider the complex $\mathcal{E}$ and the bundle $E$ as objects
in the usual derived category of sheaves on $X_+$, which are quasi-isomorphic
under the map $q$. The line bundles
making up $\mathcal{E}$ have no higher Ext groups between them (Cor. \ref{wpscor} again), so we have quasi-isomorphisms
\begin{equation}\label{qis} H^0( \mathcal{E}nd(\mathcal{E}))
\cong RHom_{X_+}(\mathcal{E}, \mathcal{E})
\cong RHom_{X_+}(E,E)\end{equation}
Here we are using the homological grading on the LHS and the Dolbeaut grading
on the RHS, but the quasi-isomorphims are also equivariant with respect to R-charge. This means we can find an
element $D_0\in H^0(\mathcal{E}nd(\mathcal{E}))$ which is closed
with respect to $\partial_\mathcal{E}$, has R-charge 1, and maps to the endomorphism $d_E$
of $E$, i.e.
$$ d_E q = q D_0$$
We can use $D_0$ to perturb the endomorphism $\partial_\mathcal{E}$ of
$\mathcal{E}$. Unfortunately this does not yet make it a B-brane
for $(X_+, W)$, rather we have
$$(\partial_\mathcal{E}+D_0)^2 = D_0^2 = W \id_{\mathcal{E}} - [ \partial_{\mathcal{E}}, D_{-1}]$$
for some element $D_{-1}\in H^0(\mathcal{E}nd(\mathcal{E}))$ which
has homological grade -1 and R-charge 1. Here we write $[\partial_\mathcal{E},
-]$ to denote the supercommutator with respect to the R-charge grading, strictly
speaking this is the differential on $H^0(\mathcal{E}nd(\mathcal{E}))$ that
comes from considering $(\mathcal{E},0)$ as a B-brane on $(X_+,0)$ rather
than as a complex of sheaves in $D^b(X_+)$, but the difference is irrelevant
and the signs are more convenient this way.

If we perturb further by $D_{-1}$ we get
$$(\partial_\mathcal{E}+D_0+D_{-1})^2 = W\id_\mathcal{E} + [D_0, D_{-1}]
+ D_{-1}^2$$
and notice that now all the unwanted terms have homological degree at most
-1. We claim we can iterate this process, and since the homological degree
is bounded it will terminate. Indeed, we wish to solve
$$(\partial_\mathcal{E} + D)^2 = W\id_\mathcal{E}$$
where 
$$D = D_0 + D_{-1} + D_{-2}+ ...$$
is a series of terms of decreasing homological grade and R-charge 1. The
piece of this equation in homological grade $-k<0$ is
$$[\partial_\mathcal{E}, D_{-k-1}] + (D^2)_{-k} = 0 $$
Assume that we have found $D_0,.., D_{-k}$ such that this equation holds in homological grades $>-k$. By (\ref{qis}), $H^0(\mathcal{E}nd(\mathcal{E}))$ has no homology in negative
degrees, so we can find $D_{-k-1}$ if $(D^2)_{-k}$ is closed. But
\begin{eqnarray*} [\partial_\mathcal{E}, (D^2)_{-k}]& = &[\partial_\mathcal{E}, D^2]_{-k+1}\\
&=& \sum_{i=0}^{-k+1}[[\partial_\mathcal{E}, D_{i-1}],D_{-k-i+1}]\\
&=& \sum_{i=0}^{-k+1}[(D^2)_i,D_{-k-i+1}]\\
& =& [D^2, D]_{-k+1}\\
&=&0
\end{eqnarray*}
so inducting on $k$ our solution $D$ exists. We let
$$d_\mathcal{E} = \partial_\mathcal{E} + D$$
so $(\mathcal{E}, d_\mathcal{E})$ is a B-brane on $(X_+, W)$. It remains
to show that it is homotopic to the brane $(E, d_E)$. To see this we consider
the dga
$$\End_{Br(X_+, W)}((E,d_E)\oplus (\mathcal{E},d_\mathcal{E}))= 
\Gamma(\mathcal{E}nd(E\oplus\mathcal{E})\otimes \mathcal{A}^{0,\bullet})$$
This carries its usual grading (the sum of R-charge and Dolbeaut grade) and also the homological
grading from $\mathcal{E}$. Its differential is a sum of terms induced from
$d_E, \bar{\partial}, \partial_\mathcal{E}$ and the $D_{-k}$, these have
homological grading $0, 0, 1$ and $-k$ respectively. Thus we can filter this
dga by defining 
$$F^p\End_{Br(X_+, W)}((E,d_E)\oplus (\mathcal{E},d_\mathcal{E}))$$
to be the sum of the bi-graded pieces that have
$$(\mbox{usual grade}) - (\mbox{homological grade}) \geq p$$
then this filtration is compatible with the differential
and the algbra structure. Also the filtration
is bounded, in the sense that the induced filtration on any (usual) graded
subspace is bounded. This is a sufficient condition for the associated spectral
sequence of dgas to converge \cite{mccleary}. To get page 1 of this spectral
sequence we take the homology of the term of the differential which has bi-degree
$(1,1)$, this is the term induced from $\partial_\mathcal{E}$. The diffential
on page 1 is induced from $d_E, \bar{\partial}$ and $D_0$, and $D_0$ was
chosen so that it induced $d_E$ on $\partial_\mathcal{E}$-homology. So page 1 is 
$$\Gamma(\mathcal{E}nd(E\oplus E)\otimes \mathcal{A}^{0,\bullet}) 
= \End_{Br(X_+, W)}((E,d_E)\oplus (E,d_E)) $$
This is concentrated in homological grade zero, so the spectral sequence
collapses at page 2. We deduce that in the homotopy category the objects
$(\mathcal{E}, d_\mathcal{E})$ and $(E, d_E)$ are isomorphic.
\end{proof}

\subsection{Spherical B-branes}\label{sphericaltwists}

We use the same set-up as in the previous subsection, but from now on we
assume that $\C^*_G$ has no zero eigenvalues in $V$, so
$$V = V_x \oplus V_y$$
are the positive and negative $\C^*_G$-eigenspaces.

The zero section gives an inclusion
$$\mathbb{P} V_x \into X_+$$
and there is an associated sky-scraper sheaf $\cO_{\mathbb{P}V_x}$. This is a spherical object in the derived category $D^b(X_+)$. We are going to modify it so as to produce a spherical object in the category of B-branes $Br(X_+, W)$.

Under our definition a B-brane is a vector bundle, so it is supported over the whole of $X_+$ (it is \quotes{space-filling}). However a better definition should allow arbitrary coherent sheaves, which in particular can be supported just on subschemes. Then no modification of $\cO_{\mathbb{P}V_x}$ would be necessary, we could just equip it with the zero endomorphism, which does indeed square to $W$ because $W \equiv0$ along the zero section. 

We have not attempted to develop such a definition because the presence of local Ext groups makes defining the morphisms between such objects significantly more difficult. Instead we shall resolve $\cO_{\mathbb{P}V_x}$ by vector bundles, and deform the resolution. Nevertheless the resulting object does behave as if it was supported just on the zero section (Prop. \ref{proponhoms}).

Let $\set{\partial_{y_i}},\set{dy^i}$ be dual bases of $V_y$ and $V_y^\vee$, and $y_i$ the corresponding co-ordinates.
Consider the Koszul resolution of $\cO_{\mathbb{P}V_x}$:
$$ (\bigwedge^\bullet V_y^\vee,\neg \Sigma_i y_i \partial_{y_i}) \iso \cO_{\mathbb{P}V_x}$$
We will deform the differential to make it a B-brane on $(X_+, W)$, and show that it is still spherical.

Write $W$ as 
\begin{equation}\label{splitW} W = \sum_i y_i f_i\end{equation}
This is possible since $W$ is gauge invariant, and has R-charge 2 so has
no constant term. We define a
B-brane on $(X_+, W)$ by the $\C^*_R$-equivariant vector bundle
$$ S := \bigwedge^\bullet( V_y^\vee[1])$$
and the endomorphism
$$d_S:=\sum_i \neg y_i\partial_{y_i} + \wedge f_i dy^i$$
It is easy to check that $d_S^2 = W\id_S$.

\begin{prop} The B-brane $(S, d_S)$ is independent, up to isomorphism, of
the choice of splitting (\ref{splitW})\end{prop}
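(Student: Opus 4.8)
The plan is to exhibit, for any two splittings $W=\sum_i y_if_i=\sum_i y_if_i'$, an explicit $\C^*_R$-equivariant automorphism of the bundle $S=\bigwedge^\bullet(V_y^\vee[1])$ intertwining $d_S$ with the differential $d_S'$ built from the primed splitting; this at once gives an isomorphism of B-branes $(S,d_S)\cong(S,d_S')$, hence an isomorphism in $Br(X_+,W)$.

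First I would pin down the ambiguity. Two splittings differ by $f_i'=f_i+g_i$ with $\sum_i y_ig_i=0$. The linear coordinates $y_1,\dots,y_q$ on the fibres of $V_y$ vanish exactly along the zero section $\P V_x\subset X_+$, which has codimension $q$, so on the smooth scheme $X_+$ they form a regular sequence and the associated Koszul complex resolves $\cO_{\P V_x}$. All the bundles $\bigwedge^\bullet V_y^\vee$ occurring in that complex are sums of $\cO(k)$ with $k>0$, so their higher cohomology on $X_+$ vanishes (Corollary \ref{wpscor}); chasing the Koszul complex then shows that a global syzygy of the $y_i$ is necessarily a global combination of the Koszul syzygies $y_je_i-y_ie_j$, and that this can be arranged $\C^*_R$-equivariantly. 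Thus $g_i=\sum_j h_{ij}y_j$ for a global, $\C^*_R$-homogeneous, antisymmetric matrix of functions $(h_{ij})$ on $X_+$. (If one only allows polynomial superpotentials and polynomial splittings, as in the formula $W=\sum y_if_i$ displayed above, this step is simply the statement that a regular sequence in a polynomial ring has only Koszul syzygies, and needs no cohomological input.)

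Next I would build the intertwiner. Put $\eta:=\tfrac12\sum_{i,j}h_{ij}\,dy^i\wedge dy^j\in\Gamma(\bigwedge^2V_y^\vee)$ and let $\theta:=\eta\wedge(-)\in\End(S)$. Since $\theta$ raises $\bigwedge$-degree by $2$ it is nilpotent, and the very R-charge bookkeeping that makes $d_S$ have R-charge $1$ (namely $\mathrm{ch}(dy^i)=\mathrm{ch}(y_i)-1$, so the term $h_{ij}\,dy^i\wedge dy^j\wedge$ has R-charge $\mathrm{ch}(h_{ij})+\mathrm{ch}(dy^i)+\mathrm{ch}(dy^j)=0$) shows that $\theta$ has R-charge $0$; hence $\Psi:=\exp(\theta)$ is a well-defined unipotent $\C^*_R$-equivariant automorphism of $S$. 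Now $\mathrm{ad}_\theta$ is a graded derivation, $\theta$ commutes with the wedging part $\sum_i(\wedge f_i\,dy^i)$ of $d_S$, and for the Koszul part $\alpha:=\neg\sum_i y_i\partial_{y_i}$ the Cartan identity $\iota_v(\eta\wedge\mu)=(\iota_v\eta)\wedge\mu+\eta\wedge(\iota_v\mu)$ gives $[\theta,\alpha]=-(\iota_{\sum y_i\partial_{y_i}}\eta)\wedge(-)=\sum_i g_i\,dy^i\wedge(-)=d_S'-d_S$ (the overall sign being fixed by the choice of $\eta$ versus $-\eta$). Therefore $\mathrm{ad}_\theta^2(d_S)=[\theta,\,d_S'-d_S]=0$, so
$$\Psi\,d_S\,\Psi^{-1}=e^{\mathrm{ad}_\theta}(d_S)=d_S+[\theta,d_S]=d_S'.$$
Thus $\Psi$ is a degree-zero closed invertible morphism $(S,d_S)\to(S,d_S')$ in $\mathcal{B}r(X_+,W)$, hence an isomorphism of B-branes over $X_+$ and in particular an isomorphism in $Br(X_+,W)$, which proves the proposition.

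The one place that needs care is the syzygy step: globalising and equivariantising the antisymmetric matrix $(h_{ij})$ (this is where the cohomology vanishing of Corollary \ref{wpscor} enters, or, in the polynomial case, the Koszul-syzygy theorem). Everything else — nilpotence of $\theta$, the R-charge count, and the Cartan computation — is routine.
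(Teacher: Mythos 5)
Your proof is correct and is essentially the paper's argument: the paper reduces (via the same Koszul-syzygy fact you make explicit) to a splitting change supported on a single pair of indices and conjugates by $\id_S+\wedge(g\,dy^1\wedge dy^2)$, which is exactly your $\Psi=\exp(\wedge\eta)$ in the case $\eta=g\,dy^1\wedge dy^2$. Your version just does all pairs at once and spells out the syzygy/equivariance step that the paper leaves implicit.
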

\begin{proof}
Let $W = \sum_i y_i \hat{f}_i $ be another choice of splitting, and $\hat{d}_S$
the corresponding endmorphism of $S$. It is sufficient to prove the lemma in the case that $f_i=\hat{f}_i$ for $i>2$. In that case
we have 
$$\hat{f}_1 = f_1 + y_2 g$$
$$\hat{f}_2 = f_2 - y_1 g$$
for some $g$. We have inverse isomorphisms
$$ \id_S + \wedge (g dy^1\wedge dy^2) : (S, d_S) \to (S, \hat{d}_S)$$
$$ \id_S - \wedge (g dy^1\wedge dy^2) : (S, \hat{d}_S) \to (S, d_S)$$
and it is easy to check that these are closed.
\end{proof}

Let $\zeta: \mathbb{P}V_x \into X_+$ denote the zero section.

\begin{prop}\label{proponhoms} For any B-brane $(E,d_E)$ on $(X_+,W)$, the homology of
$$\Hom_{Br(X_+,W)}((E, d_E), (S, d_S))$$
can be computed from a spectral sequence whose first page is
$$H^\bullet_{\mathbb{P}V_x} (\zeta^*E^\vee)$$
 with the differential induced from $d_E$.
\end{prop}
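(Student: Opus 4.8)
The plan is to compute the morphism complex $\Hom_{Br(X_+,W)}((E,d_E),(S,d_S)) = R\Gamma_{X_+}(\mathcal{H}om(E,S))$ directly, via a spectral sequence of a filtered complex. Write $\mathcal{H}om(E,S)=E^\vee\otimes S$ and decompose its differential $d_{E,S}=\id_E^\vee\otimes d_S-d_E^\vee\otimes\id_S$ into three operators on $E^\vee\otimes S$: the Koszul contraction $\delta_{\mathrm{Kos}}=\sum_i\neg y_i\partial_{y_i}$, the wedging operator $\delta_f=\wedge\sum_i f_i\,dy^i$, and $\delta_E=-d_E^\vee\otimes\id_S$. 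In the Dolbeault model the morphism complex is then $\Gamma(E^\vee\otimes S\otimes\mathcal{A}^{0,\bullet}_{X_+})$ with total differential $\bar\partial+\delta_{\mathrm{Kos}}+\delta_f+\delta_E$. The one structural input I need is that $(E^\vee\otimes S,\delta_{\mathrm{Kos}})$ is a locally free resolution of $\zeta_*\zeta^*E^\vee$: since $V_z=0$ here, $X_+$ is the total space of a vector bundle over $\mathbb{P}V_x$ and the fibre coordinates $y_1,\dots,y_q$ form a regular sequence cutting out the zero section, so $(S,\delta_{\mathrm{Kos}})$ is the Koszul resolution of $\zeta_*\cO_{\mathbb{P}V_x}$ and tensoring by the locally free $E^\vee$ preserves this. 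Hence the cohomology of $\Gamma(E^\vee\otimes S\otimes\mathcal{A}^{0,\bullet}_{X_+})$ taken with respect to $\bar\partial+\delta_{\mathrm{Kos}}$ alone is $H^\bullet(X_+,\zeta_*\zeta^*E^\vee)=H^\bullet_{\mathbb{P}V_x}(\zeta^*E^\vee)$ (Leray, $\zeta$ a closed immersion), concentrated in exterior (Koszul) degree $0$.

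Now filter the morphism complex decreasingly, the $n$-th term being the span of those homogeneous summands with $(\text{exterior degree of }S)+(\text{R-charge})\geq n$. Because $d_{E,S}$ has R-charge $1$ and its pieces $\delta_{\mathrm{Kos}},\delta_E,\delta_f$ change the exterior degree by $-1,0,+1$ respectively, each of them is R-charge-homogeneous of weight $1$; together with $\bar\partial$ (R-charge $0$, exterior degree $0$) this means the four pieces raise the filtration index by $0,1,2$ and $0$ respectively, so the filtration is stable under the total differential and $\bar\partial+\delta_{\mathrm{Kos}}$ is its page-zero differential. On each piece of fixed total degree the filtration is bounded — the exterior degree lies in $[0,q]$ and, the Dolbeault degree lying in $[0,\dim X_+]$, the R-charge lies in a bounded range — so the spectral sequence converges, exactly as in the proof of Lemma \ref{qes}. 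By the previous paragraph its first page is $E_1=H^\bullet_{\mathbb{P}V_x}(\zeta^*E^\vee)$, sitting in exterior degree $0$. The page-one differential is induced by $\delta_E=-d_E^\vee$; under the identification $E_1\cong H^\bullet_{\mathbb{P}V_x}(\zeta^*E^\vee)$ coming from the Koszul resolution, $-d_E^\vee$ descends to the dual of $\zeta^*d_E=d_E|_{\mathbb{P}V_x}$ (the action of $d_E^\vee$ on $E^\vee$ pushed to the quotient $E^\vee|_{\mathbb{P}V_x}$), which squares to $\zeta^*W=0$ and so is a genuine differential: precisely the differential induced from $d_E$ in the statement. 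Finally $\delta_f$ raises the exterior degree, which is $0$ on $E_1$, so it acts as zero on all later pages, and there are no higher differentials.

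The conceptual point — and where the construction of $(S,d_S)$ enters — is the choice of filtration. Since $d_S=\delta_{\mathrm{Kos}}+\delta_f$ is of matrix-factorization type, $\delta_f$ moves the Koszul degree opposite to $\delta_{\mathrm{Kos}}$, so the naive filtration by Koszul degree is not preserved by the differential; one repairs this by adding the R-charge, which pins $\bar\partial$ and $\delta_{\mathrm{Kos}}$ at level zero while pushing $\delta_f$ and $d_E$ onto later pages. The remaining verifications — that the filtration really is stable under $\bar\partial+\delta_{\mathrm{Kos}}+\delta_f+\delta_E$, that $(\bar\partial+\delta_{\mathrm{Kos}})^2=0$, that boundedness on graded pieces gives convergence, and that $\delta_f$ genuinely drops out for degree reasons — are routine; the only mildly delicate one is checking that $-d_E^\vee$ induces exactly $\zeta^*d_E$ (dualized) on $E_1$ and not some correction term.
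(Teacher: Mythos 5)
Your argument is essentially the paper's: you split $d_S$ into its Koszul and wedging parts and filter the morphism complex by a combination of exterior degree and R-charge so that the Koszul part sits at filtration level zero. The only difference is cosmetic: the paper filters by (exterior degree)$+$(R-charge)$+$(Dolbeault degree), which puts $\bar\partial$ on page one and forces a second, nested spectral sequence to reach $H^\bullet_{\mathbb{P}V_x}(\zeta^*E^\vee)$, whereas your filtration by (exterior degree)$+$(R-charge) keeps $\bar\partial$ on page zero and lands on the stated first page in a single step. Your convergence check (boundedness of the filtration on each piece of fixed total degree, since Dolbeault degree is bounded by $\dim X_+$ and exterior degree by $\dim V_y$) is fine, as is the identification of the $d_1$ differential with the one induced by $\zeta^* d_E$.

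One assertion in your last step is false, though it is not needed for the proposition: you claim that ``there are no higher differentials.'' Higher differentials arise not only from $\delta_f$ acting directly but from zig-zags through $\delta_E$ (and from the hypercohomology-type differentials hidden in $H^\bullet_{\mathbb{P}V_x}$ when that cohomology is spread over several degrees), and they are genuinely nonzero in general. The paper's flop-with-superpotential example makes this concrete: there $\Hom_{Br(X_+,W)}((S,d_S),(S,d_S))$ is acyclic while the first page is $\C\oplus\C$, so some higher differential must be an isomorphism; Corollary \ref{sphericalobject} is phrased precisely to allow for this. Since the proposition only asserts the existence of a convergent spectral sequence with the stated first page and first differential, you should simply delete the degeneration claim.
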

Note that since $W=0$ on the zero section, $d_E$ does indeed induce a differential on $H^\bullet_{\mathbb{P}V_x} (\zeta^*E^\vee)$.
\begin{proof}
The bundle $S$, as well as being $\C^*_R$-equivariant, is graded by the powers
in the exterior algebra. Let's call this the exterior grading, and write
$$d_S = \partial_S +D_S$$
for the terms of exterior grade -1 and +1 ($\partial_S$ is the usual
Koszul differential). Consider
$$\Hom_{Br(X_+, W)}((E, d_E), (S, d_S)) = \Gamma(\mathcal{H}om(E,S)\otimes \mathcal{A}^{0,\bullet})$$
This carries its usual grading which is the sum of R-charge and the Dolbeaut grading, and also an exterior grading induced from the grading on $S$. The
differential has terms induced from $\partial_S$, $D_S$, $d_E$ and $\bar{\partial}$ having bi-degrees
$(1,-1)$, $(1,1)$, $(1,0)$ and $(1,0)$ respectively. 
We now proceed by a similar argument to the one used at the end of Lemma
\ref{qes}. Define a filtration by
letting
$$F^p\Hom_{Br(X_+, W)}((E, d_E), (S, d_S)) \subset \Hom_{Br(X_+, W)}((E, d_E), (S, d_S))$$
be the direct sum of the bi-graded pieces whose total degree is $\geq p$,
then the differential preserves this filtration, and is bounded for any fixed
total of the Dolbeaut grade and R-charge. Page 1 of the associated spectral
sequence is given by taking the homology of the term induced from $\partial_S$ only, so
it is
$$\Gamma(\mathcal{H}om(E, 
    \cO_{\mathbb{P}V_x})\otimes \mathcal{A}^{0,\bullet})
 \cong R\Gamma_{\mathbb{P}V_x}(\zeta^*E^\vee)$$ 
with differential induced from $d_E$ and $\bar{\partial}$. This is concentrated in exterior grade zero, so this spectral sequence collapses after this page.

To compute page 2, we can use a second spectral sequence (essentially the one from Remark \ref{spectralsequence}) by remembering that the complex on page 1 is actually a bi-complex under the Dolbeaut grading and R-charge.
\end{proof}

\begin{cor} \label{sphericalobject}$(S, d_S)$ is either a spherical object or zero in $H_0(Br(X_+, W))$.
\end{cor}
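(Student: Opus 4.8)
The plan is to feed the B-brane $(S,d_S)$ itself into Proposition \ref{proponhoms}, i.e. to take $E = S$. This reduces the computation of $\Ext^\bullet_{Br(X_+,W)}(S,S) = H^\bullet\bigl(\Hom_{Br(X_+,W)}((S,d_S),(S,d_S))\bigr)$ to understanding the single cohomology group $H^\bullet_{\mathbb{P}V_x}(\zeta^*S^\vee)$, which will turn out to be just two-dimensional.

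First I would compute $\zeta^*S^\vee$. Since $S = \bigwedge^\bullet(V_y^\vee[1])$ is pulled back along the projection $X_+\to\mathbb{P}V_x$, dualizing and restricting along the zero section $\zeta$ gives $\zeta^*S^\vee \cong \bigwedge^\bullet(V_y[-1])$, which as a $\C^*_R$-equivariant bundle on $\mathbb{P}V_x$ is a direct sum, over subsets $I\subseteq\{1,\dots,m\}$ with $m = \dim V_y$, of shifted copies of the line bundles $\cO(-b_I)$, where $b_I$ is the sum over $i\in I$ of the (positive) $\C^*_G$-weights of the coordinates $y_i$. In this subsection $\C^*_G$ has no zero eigenvalues, so each weight is strictly positive; hence $b_\emptyset = 0$, $b_{\{1,\dots,m\}} = d$ (using that $\C^*_G$ acts through $SL(V)$, so the negative weights sum to $-d$), and $0 < b_I < d$ for every other $I$. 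Now Lemma \ref{wps} kicks in: for $0 < k < d$ the line bundle $\cO(-k)$ has vanishing cohomology on $\mathbb{P}V_x$, so the only summands of $\zeta^*S^\vee$ contributing to $H^\bullet_{\mathbb{P}V_x}$ are the two extreme ones, giving $H^0_{\mathbb{P}V_x}(\cO) = \C$ (from $I=\emptyset$) and $H^{\dim\mathbb{P}V_x}_{\mathbb{P}V_x}(\cO(-d)) = \C$ (from $I=\{1,\dots,m\}$). Thus the first page of the spectral sequence of Proposition \ref{proponhoms} is $\C\oplus\C$, the $I=\emptyset$ line being the image of $\id_S$.

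The rest is formal. This first page is a two-term complex $[\C\to\C]$ whose differential is induced by $d_S$; since at every later page the relevant terms are at most these two one-dimensional pieces and a nonzero map $\C\to\C$ is an isomorphism, at most one differential of the spectral sequence can be nonzero. If none is, then $\Ext^\bullet_{Br(X_+,W)}(S,S)\cong\C\oplus\C[-n]$, where $n$ is the difference of the internal degrees of the two generators and $\id_S$ spans the degree-zero piece; this is exactly the spherical condition in the sense used above (it includes $n=0$, the homology of the $0$-sphere). If one differential is an isomorphism, the spectral sequence abuts to zero, so $\Hom_{Br(X_+,W)}(S,S)$ is acyclic, $\id_S = 0$ in $H_0(Br(X_+,W))$, and $(S,d_S)\cong 0$.

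The delicate point is really just the bookkeeping in the middle step: one must be sure that the two surviving classes are genuinely the only contributions (this is the content of the cohomology vanishing applied to all the intermediate twists $\cO(-b_I)$), and that the $I=\emptyset$ class is the one carrying the identity, so that the ``spherical'' branch of the conclusion is correctly stated. Note also that this argument does not pin down the integer $n$ — it depends on the $\C^*_R$-weights of the $y_i$ — so only sphericality, not the dimension of the relevant sphere, can be read off in this way.
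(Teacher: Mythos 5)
Your proposal is correct and follows essentially the same route as the paper: compute $H^\bullet_{\mathbb{P}V_x}(\zeta^*S^\vee)$ via the line-bundle decomposition of $\bigwedge^\bullet V_y$ and the cohomology vanishing of Lemma \ref{wps}, obtain $\C\oplus\C$ on the first page of the spectral sequence of Proposition \ref{proponhoms}, and conclude that it either degenerates (spherical) or converges to zero (contractible). You simply spell out the intermediate-twist bookkeeping that the paper leaves implicit.
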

\begin{proof}
 By Corollary \ref{wpscor}
\begin{eqnarray*}H^\bullet_{\mathbb{P}V_x}(\zeta^*S^\vee)& =& H^\bullet_{\mathbb{P}V_x}(\cO) \oplus H^\bullet_{\mathbb{P}V_x}(\cO(-d))\\
&=& \C \oplus \C
\end{eqnarray*}
where the second copy of $\C$ has some bi-degree depending on the dimensions and R-charges of $V_x$ and $V_y$. Either the spectral sequence collapses at this point (which it
usually will for degree reasons) and $(S, d_S)$ is spherical, or it converges
to $0$ and $(S, d_S)$ is contractible.
\end{proof}

\begin{eg} \textit{(Flop with superpotential)} Let $V=\C^4$ with $\C^*_G$
weights 1,1,-1,-1, so both GIT quotients are isomorphic to $\cO(-1)_{\mathbb{P}^1}^{\oplus
2}$. Let $W = x_1 y_1 + x_2 y_2$ (and pick any compatible $\C^*_R$ action).
We can take $(S,d_S)$ to be 
$$\xymatrix@=35pt{ \cO(2) \ar@<1ex>[r]^(.4){(y_2, -y_1)} & \cO(1)^{\oplus 2} \ar[l]^(.6){(x_2, -x_1)} \ar@<1ex>[r]^(.6){(y_1, y_2)} & \cO \ar[l]^(.4){(x_1, x_2)}
   }$$
so
$$\Hom_{Br(X_+,W)} ((S, d_S), (S, d_S)) \cong R\Gamma_{\mathbb{P}^1}(\zeta^*S^\vee)
\cong 0$$  
and so $(S,d_S)$ is contractible. In fact one would expect the whole category $Br(X_+,W)$ in this example to be zero by Kn\"orrer
periodicity.
\end{eg}

\subsection {Spherical twists}

We continue with the same class of examples as in the previous subsection. We have shown in Theorem \ref{GITequivalence} that for each $t\in \Z$ we have quasi-equivalences
$$ Br(X_+, W) \stackrel{\iota_+^*}{\longleftarrow} \mathcal{G}_t \stackrel{\iota_-^*}{\longrightarrow} Br(X_-, W)$$
On the homotopy categories these can be inverted, so we have $\Z$-many equivalences
$$\Phi_t: H_0(Br(X_+,W)) \iso H_0(Br(X_-,W))$$
passing through the categories $H_0(\mathcal{G}_t)$, and hence we have autoequivalences $\Phi_{t+1}^{-1}\Phi_t$ of $Br(X_+,W)$. The statement that we would like to be able to make is that $\Phi_{t+1}^{-1}\Phi_t$ is an inverse spherical twist around the spherical object $(S(t), d_S)$, in the sense of \cite{ST}. Unfortunately such a statement would require a proper theory of Fourier-Mukai transforms for Landau-Ginzburg B-models, and we have not developed such a theory. Instead we're going to settle for a less clean statement, which we prove below (Theorem \ref{sphericaltwistthm}). 

Recall that an inverse spherical twist on a space $X$ is an auto-equivalence of the derived category $D^b(X)$ that sends an object $\mathcal{E}$ to the cone on the natural map
$$[\mathcal{E} \longrightarrow \RHom_X(\mathcal{E}, S)^\vee \otimes S]$$
where $S$ is a fixed spherical object in $D^b(X)$. We have shown (Cor. \ref{sphericalobject}) that we have an object $(S, d_S)\in Br(X_+,W)$ that is either spherical or zero, we can twist it by $\cO(t)$ to get other B-branes $S(t)$ that are either spherical or zero. What we're going to do is construct, for any B-brane $(E,d_E)\in Br(X_+,W)$, a suitable map
$$\epsilon_E: E \to \mathcal{H}^\vee\otimes S(t)$$
where $\mathcal{H}$ is a complex such that
$$\mathcal{H}\simeq \Hom_{Br(X_+,W)}(E,S(t))$$
and then show that $\Phi_{t+1}^{-1}\Phi_t$ sends $E$ to the cone on $\epsilon_E$. If $S(t)$ is spherical, this is a good approximation to showing that $\Phi_{t+1}^{-1}\Phi_t$ is a spherical twist (at least on objects). If $S(t)$ is zero, it shows that $\Phi_{t+1}^{-1}\Phi_t$ is the identity (at least on objects).

We begin with another Corollary of Proposition \ref{proponhoms}.

\begin{lem}\label{supponP} Let $(E, d_E)\in \iota^*_+\mathcal{G}_t$. Then 
$$\Hom_{Br(X_+,W)}((E, d_E), (S(t), d_S))\cong (H^0_{\mathbb{P}V_x} (\zeta^*E^\vee(t),\;
d_E^\vee)$$
\end{lem}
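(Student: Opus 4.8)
This is meant to be a corollary of Proposition \ref{proponhoms}: the whole argument of that proposition runs verbatim after twisting by $\cO(t)$, and the only extra ingredient is a cohomology vanishing on $\mathbb{P}V_x$ that uses the hypothesis $(E,d_E)\in\iota^*_+\mathcal{G}_t$.

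The plan is as follows. First, apply Proposition \ref{proponhoms} to the brane $(S(t),d_S)$ in place of $(S,d_S)$ --- equivalently, since $\mathcal{H}om(E,S(t))=\mathcal{H}om(E(-t),S)$ compatibly with the B-brane differentials, apply it to $(E(-t),d_E)$ and $(S,d_S)$. Either way one gets that the homology of $\Hom_{Br(X_+,W)}((E,d_E),(S(t),d_S))$ is computed by a spectral sequence whose first page is $H^\bullet_{\mathbb{P}V_x}(\zeta^*E^\vee(t))$ with differential induced from $d_E$, and, unwinding Proposition \ref{proponhoms} one step further, that this first page is the $\bar{\partial}$-cohomology of the complex $R\Gamma_{\mathbb{P}V_x}(\zeta^*E^\vee(t))$, which still carries the differential $d_E^\vee$.

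The key step is to show this first page is concentrated in a single degree. Since $(E,d_E)\in\iota^*_+\mathcal{G}_t$, the bundle $E$ is a direct sum of $\C^*_R$-shifts of the line bundles $\cO(k)$ with $t\le k\le t+d-1$; the $\C^*_R$-shifts only change the R-charge grading, not the $\C^*_G$-degree, so $\zeta^*E^\vee(t)$ is a direct sum of shifts of the line bundles $\cO(j)$ on $\mathbb{P}V_x$ with $j=t-k$, and $j$ ranges over $[1-d,0]$. Each such $j$ satisfies $j\ge 1-d>-d$, so Lemma \ref{wps} gives $H^p_{\mathbb{P}V_x}(\cO(j))=0$ for all $p>0$; hence $H^\bullet_{\mathbb{P}V_x}(\zeta^*E^\vee(t))=H^0_{\mathbb{P}V_x}(\zeta^*E^\vee(t))$.

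Finally, feeding this back into Proposition \ref{proponhoms}: the complex $R\Gamma_{\mathbb{P}V_x}(\zeta^*E^\vee(t))$ is now quasi-isomorphic, via its Dolbeaut-degree spectral sequence (which collapses since only Dolbeaut degree $0$ survives), to $(H^0_{\mathbb{P}V_x}(\zeta^*E^\vee(t)),\,d_E^\vee)$, and all the spectral sequences in sight degenerate, so this is what $\Hom_{Br(X_+,W)}((E,d_E),(S(t),d_S))$ reduces to. I do not expect a genuine obstacle: everything substantial is already inside Proposition \ref{proponhoms}, and the only points needing care are the arithmetic of the degree window $[1-d,0]$ --- in particular that it sits strictly above the ``bad'' range $k\le -d$ of Lemma \ref{wps} --- and the mild bookkeeping that the $\C^*_R$-shifts occurring in a general object of $\mathcal{G}_t$ do not affect the relevant cohomology.
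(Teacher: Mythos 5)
Your proposal is correct and follows essentially the same route as the paper: reduce to Proposition \ref{proponhoms} via the identification $\mathcal{H}om(E,S(t))=\mathcal{H}om(E(-t),S)$, then use the grade-restriction window together with Lemma \ref{wps} to see that $H^\bullet_{\mathbb{P}V_x}(\zeta^*E^\vee(t))$ is concentrated in $H^0$, so the spectral sequence collapses. The only detail the paper adds (not needed for the statement itself) is the explicit identification $H^0=\C^{\oplus m_E}$ coming from the $\cO(t)$ summands of $E$.
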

\begin{proof}
$$\Hom_{Br(X_+,W)}(E, S(t))= \Hom_{Br(X_+,W)}(E(-t), S)$$
which by Prop. \ref{proponhoms} can be computed from $H^\bullet_{\mathbb{P}V_x} (\zeta^*E^\vee(t))$.
But $E$ is a direct sum of line bundles $\cO(k)$ with $t\leq k <t+d$, so by Lemma
\ref{wps},  
$$H^\bullet_{\mathbb{P}V_x} (\zeta^*E^\vee(t)) = H^0_{\mathbb{P}V_x} (\zeta^*E^\vee(t)) = \C^{\oplus m_E}$$
 where $m_E$ is the number of copies of $\cO(t)$ appearing
in $E$, and the spectral sequence collapses.
\end{proof}

Pick an $(E,d_E)\in \iota^*_+ \mathcal{G}_t$. For notational convenience let us define
$$\mathcal{H}:=(H^0_{\mathbb{P}V_x} (\zeta^*E^\vee(t)),\;
d_E^\vee)$$
If we were in the special case when $W=0$ and we had chosen $d_E=0$ then there would be a canonical map (the unit of the adjunction)
$$\epsilon_0: E \to \mathcal{H}^\vee\otimes S(t)$$
This map just projects $E$ onto its $\cO(t)^{\oplus m_E}$ summand and then includes this as the final term of $\mathcal{H}^\vee\otimes S(t)$. 

When $d_E \neq 0$ the map $\epsilon_0$ is not closed, so we cannot take its mapping cone. We can fudge this using the following:

\begin{lem}
There is a closed map of R-charge 0
$$\epsilon_E = \epsilon_0 + \epsilon_{1} + ... : E \to \mathcal{H}^\vee\otimes S(t)$$
where $\epsilon_i$ has exterior grade $i$. 
\end{lem}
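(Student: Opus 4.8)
The plan is to construct $\epsilon_E$ by an iterative perturbation, in the spirit of the proofs of Lemma~\ref{qes} and Proposition~\ref{proponhoms}. Work inside the complex
$$C:=\Hom_{Br(X_+,W)}(E,\mathcal{H}^\vee\otimes S(t))=\Gamma(\mathcal{H}om(E,S(t))\otimes\mathcal{H}^\vee\otimes\mathcal{A}^{0,\bullet}),$$
graded, in addition to the usual grading (R-charge plus Dolbeault degree plus the internal degree of $\mathcal{H}$), by the exterior grading inherited from $S$. Write the differential on $C$ as $d=d_{-1}+d_0+d_{+1}$, its decomposition by exterior grade: $d_{-1}$ is induced by the Koszul part $\partial_{S(t)}$ of $d_{S(t)}$, $d_{+1}$ by the exterior-grade-$+1$ part $D_{S(t)}$ of $d_{S(t)}$, and $d_0$ by $\bar{\partial}$, by $d_E$, and by the differential $d_E^\vee$ on $\mathcal{H}$. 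Since $\epsilon_0$ is a bundle map landing in exterior grade $0$, it is automatically $d_{-1}$-closed; we look for $\epsilon_E=\epsilon_0+\epsilon_1+\cdots$ with $\epsilon_i$ of exterior grade $i$ and total degree $0$. Splitting $d\epsilon_E=0$ by exterior grade, the equation in exterior grade $k$ reads $d_{-1}\epsilon_{k+1}=-d_0\epsilon_k-d_{+1}\epsilon_{k-1}$, and since the exterior grading on $S$ is bounded by $\dim V_y$ any solution is automatically finite.

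The crucial input is cohomological, and is already contained in the proof of Proposition~\ref{proponhoms}: the Koszul differential $\partial_{S(t)}$ on $\mathcal{H}om(E,S(t))=E^\vee\otimes S(t)$ has cohomology $\mathcal{H}om(E,\cO_{\mathbb{P}V_x}(t))$, concentrated in exterior grade $0$. Hence the cohomology of $(C,d_{-1})$ is concentrated in exterior grade $0$, where by Lemma~\ref{supponP} it is
$$R\Gamma_{\mathbb{P}V_x}(\zeta^*E^\vee(t))\otimes\mathcal{H}^\vee\;\simeq\;\mathcal{H}\otimes\mathcal{H}^\vee\;=\;\End_\C(\mathcal{H}),$$
the induced differential being the commutator differential coming from $d_\mathcal{H}=d_E^\vee$.

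Now I would run the induction on $k$. Suppose $\epsilon_0,\dots,\epsilon_k$ have been chosen satisfying the displayed equations in exterior grades below $k$; the obstruction to continuing is $\Omega_k:=-d_0\epsilon_k-d_{+1}\epsilon_{k-1}$, an element of $C$ of exterior grade $k$. A short Maurer--Cartan computation using $d_{-1}^2=0$, $d_{-1}d_0+d_0d_{-1}=0$, $d_{-1}d_{+1}+d_0^2+d_{+1}d_{-1}=0$ and so on --- formally identical to the chain of equalities near the end of the proof of Lemma~\ref{qes} --- shows that $\Omega_k$ is $d_{-1}$-closed. For $k\geq1$, the cohomology of $(C,d_{-1})$ vanishes in exterior grade $k$, so $\Omega_k$ is $d_{-1}$-exact and one can choose $\epsilon_{k+1}$ of the correct degree (necessarily zero once $k>\dim V_y$, whence termination). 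For the base case $k=0$, $\Omega_0=-d_0\epsilon_0$ is $d_{-1}$-closed of exterior grade $0$, and its class in the cohomology of $(C,d_{-1})$ is the image of the class of $\epsilon_0$ under the induced differential. But $\epsilon_0$ --- the composite of the projection $E\onto\cO(t)^{\oplus m_E}$ with its inclusion as the exterior-grade-$0$ summand of $\mathcal{H}^\vee\otimes S(t)$ --- represents exactly $\id_\mathcal{H}\in\End_\C(\mathcal{H})$ under the identification above (this is the sense in which it is the unit of the adjunction), and $\id_\mathcal{H}$ is closed for the commutator differential. Hence $[\Omega_0]=0$, so $\Omega_0$ is $d_{-1}$-exact and $\epsilon_1$ exists, closing the induction.

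The real obstacle here is the grading bookkeeping: one must keep the exterior, Dolbeault and R-charge gradings straight enough to verify at each stage that $\Omega_k$ is $d_{-1}$-closed and to locate $\epsilon_{k+1}$ in the correct graded piece. The vanishing of the obstructions themselves is cheap, coming entirely from the concentration of the $d_{-1}$-cohomology in exterior grade $0$ together with the fact that $\epsilon_0$ represents the identity endomorphism of $\mathcal{H}$.
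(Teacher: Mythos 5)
Your proposal is correct and follows essentially the same route as the paper: decompose the differential by exterior grade as $d_{-1}+d_0+d_{+1}$, use the acyclicity of the Koszul part outside exterior grade $0$ to solve $d_{-1}\epsilon_{k+1}=-d_0\epsilon_k-d_{+1}\epsilon_{k-1}$ inductively, and handle the base case by noting that $\epsilon_0$ represents $\id_{\mathcal{H}}$ in the $d_{-1}$-cohomology, which is killed by the induced (commutator) differential — this is exactly the paper's observation that the differential on $\mathcal{H}^\vee$ cancels the relevant component of $d_E$. The only cosmetic difference is that you phrase the base case in terms of $\End_{\C}(\mathcal{H})$ rather than componentwise.
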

Recall that the \quotes{exterior grade} refers to the grading on $S$ that comes from its underlying vector bundle being an exterior algebra.
\begin{proof}
We use the iterative technique from Lemma \ref{qes}. Consider the complex
$$\Hom_{X_+}(E, \mathcal{H}^\vee\otimes S(t))$$
This is bigraded by R-charge and exterior grade, and carries a differential $d$ composed of terms
$$d= d_{-1} + d_0 + d_{1}$$
of exterior grade -1, 0, and 1. The term $d_{-1}$ just comes from the Koszul differential $\partial_S$ on $S$. If we just take $d_{-1}$ homology, the complex is acyclic except in exterior grade 0, where it is
$$\Hom_{\P V_x}(\cO(t)^{\oplus m_E},\cO(t)^{\oplus m_E})$$
 We want to solve $d \epsilon_E = 0$, which in exterior grade $k$ is
$$ d_{-1} \epsilon_{k+1} = -d_0\epsilon_k- d_1\epsilon_{k-1} $$
Suppose we have solved this for all exterior grades $\leq k$. Then
\begin{eqnarray*}d_{-1}(-d_0\epsilon_k - d_1\epsilon_{k-1})& = &d_0d_{-1}\epsilon_k + d_1d_{-1}\epsilon_{k-1} + d_0^2 \epsilon_{k-1}\\
&=&-d_0(d_0\epsilon_{k-1} + d_1\epsilon_{k-2}) - d_1(d_0\epsilon_{k-2}+d_1\epsilon_{k-3}) +d_0^2\epsilon_{k-1} \\
&=&0
\end{eqnarray*} 
If $k\geq 1$ then by acyclicity an $\epsilon_{k+1}$ exists. To check that an $\epsilon_1$ exists we need to check that $d_0\epsilon_0$ is zero in $d_{-1}$-homology, which means calculating the component of it that maps $\cO(t)^{\oplus m_E}\subset E$ to $\cO(t)^{\oplus m_E}\subset \mathcal{H}^\vee\otimes S(t)$. But this is zero, because the differential on $\mathcal{H}^\vee$ cancels the component of $d_E$ that maps $\cO(t)^{\oplus m_E}$ to itself.
\end{proof}

Write $(C_E, d_C)$ for the mapping cone of $\epsilon_E$.

\begin{thm} \label{sphericaltwistthm}For any $(E, d_E)\in \iota^*_+\mathcal{G}_t$,
$$\Phi_{t+1}^{-1}\circ \Phi_t([(E,d_E)]) \simeq [(C_E,d_C)]$$
in the homotopy category of $Br(X_+,W)$.
\end{thm}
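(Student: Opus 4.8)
\emph{Strategy.} The plan is to perform the construction of $\epsilon_E$ and its cone not on $X_+$ but one level up, on the Artin stack $\mathcal{X}$, so that a single object of $\mathcal{G}_{t+1}$ controls both GIT quotients at once. Recall from Theorem~\ref{GITequivalence} (via Lemmas~\ref{qff} and~\ref{qes}) that on homotopy categories $\Phi_t = \iota_-^*\circ(\iota_+^*|_{\mathcal{G}_t})^{-1}$ and $\Phi_{t+1}^{-1} = \iota_+^*\circ(\iota_-^*|_{\mathcal{G}_{t+1}})^{-1}$. Write $E = \iota_+^*\tilde{E}$ with $\tilde{E}\in\mathcal{G}_t$. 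Then $\Phi_t([E]) = [\iota_-^*\tilde{E}]$, and to compute $\Phi_{t+1}^{-1}$ of this it suffices to exhibit some $\widetilde{C}\in\mathcal{G}_{t+1}$ with $\iota_-^*\widetilde{C}\simeq\iota_-^*\tilde{E}$, whereupon $\Phi_{t+1}^{-1}\Phi_t([E]) = [\iota_+^*\widetilde{C}]$. So the theorem reduces to producing $\widetilde{C}\in\mathcal{G}_{t+1}$ with (a)~$\iota_+^*\widetilde{C}\simeq C_E$ and (b)~$\iota_-^*\widetilde{C}\simeq\iota_-^*\tilde{E}$.

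\emph{Step (a): lifting to $\mathcal{X}$.} The bundle $\bigwedge^\bullet V_y^\vee[1]$ with $d_S$ is defined $\C^*_G$-equivariantly on $V$, hence is a B-brane $\hat{S}\in Br(\mathcal{X},W)$ with $\iota_+^*\hat{S}=S$. I would re-run the iterative construction of the preceding lemma inside $\Hom_{\mathcal{X}}(\tilde{E},\hat{\mathcal{H}}^\vee\otimes\hat{S}(t))$. The point is that every line bundle occurring in $\mathcal{H}om(\tilde{E},\hat{\mathcal{H}}^\vee\otimes\hat{S}(t))$ is an $\cO(k)$ with $k\geq 1-d>-d$, so by Corollary~\ref{wpscor} its derived sections on $\mathcal{X}$ and on $X_+$ coincide, and the homology of this complex with respect to the Koszul part of its differential is again concentrated in exterior grade $0$, where it is $\End_\C(\mathcal{H})$ --- because restricting $\tilde{E}^\vee(t)$ to $\{y=0\}=V_x$ and taking $\C^*_G$-invariants only picks out the $\cO(t)$-summands of $\tilde{E}$ (compare Lemma~\ref{supponP}). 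The iteration therefore produces a closed $\hat{\epsilon}\colon\tilde{E}\to\hat{\mathcal{H}}^\vee\otimes\hat{S}(t)$ with unit leading term; one then takes the $\epsilon_E$ of the theorem to be $\iota_+^*\hat{\epsilon}$ (a legitimate choice, as $\iota_+^*$ is a dg-functor preserving the leading term) and sets $\widetilde{C}:=\mathrm{cone}(\hat{\epsilon})$, so that $\iota_+^*\widetilde{C}=\mathrm{cone}(\iota_+^*\hat{\epsilon})=C_E$. That $\widetilde{C}\in\mathcal{G}_{t+1}$ is then a bookkeeping check: $\tilde{E}$ has summands among $\cO(t),\dots,\cO(t+d-1)$, $\hat{S}(t)$ (exterior grades $0,\dots,\dim V_y$) among $\cO(t),\dots,\cO(t+d)$, and the leading term of $\hat{\epsilon}$ identifies the $\cO(t)^{\oplus m_E}$ in $\tilde{E}$ isomorphically with the exterior-grade-$0$ summand $\hat{\mathcal{H}}^\vee\otimes\cO(t)$ of the target (invertible there for the reason in the preceding lemma: the differential on $\hat{\mathcal{H}}^\vee$ cancels the self-map of $\cO(t)^{\oplus m_E}$ induced by $d_{\tilde{E}}$); Gaussian-eliminating this matched pair leaves a B-brane all of whose summands lie in $\{\cO(t+1),\dots,\cO(t+d)\}$, and a second application of Corollary~\ref{wpscor} to its endomorphism bundle shows the reduced differential is the restriction of one on $\mathcal{X}$, so this reduced $\widetilde{C}$ is in $\mathcal{G}_{t+1}$ and still satisfies $\iota_+^*\widetilde{C}\simeq C_E$.

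\emph{Step (b) and conclusion.} It remains to show $\iota_-^*\hat{S}(t)\simeq 0$ in $H_0(Br(X_-,W))$. With only its Koszul part, $\iota_-^*\hat{S}$ is the pulled-up Koszul resolution of the structure sheaf of $\{y=0\}$, which is precisely the unstable locus deleted in forming $X_-$ --- empty there --- so this bounded complex of vector bundles on the smooth quasi-projective stack $X_-$ is acyclic, hence null-homotopic; and $d_S$ differs from its Koszul part only by $D_S$, which strictly raises the bounded exterior grade, so the perturbed complex $(\iota_-^*\hat{S},d_S)$ is still null-homotopic (homotopy perturbation). Equivalently, the exterior-grade spectral sequence for $H^\bullet\End_{Br(X_-,W)}(\iota_-^*\hat{S}(t))$ has page $1$ the Dolbeault cohomology of the endomorphisms of an acyclic complex, hence $0$, so that endomorphism complex is acyclic. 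Applying the dg-functor $\iota_-^*$ to $\widetilde{C}=\mathrm{cone}(\hat{\epsilon})$ then gives the cone of a map into a null object, so $\iota_-^*\widetilde{C}\simeq\iota_-^*\tilde{E}$, up to the overall shift in the normalization of the mapping cone (chosen, as for the inverse spherical twist, so that $\widetilde{C}$ completes $\hat{\epsilon}$ to an exact triangle on the $\tilde{E}$ side). This is (b); together with (a) we conclude $\Phi_{t+1}^{-1}\Phi_t([E]) = [\iota_+^*\widetilde{C}] = [C_E]$.

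\emph{Main obstacle.} The hardest part will be the transfer in steps (a)--(b): the constructions of $d_S$ and of $\epsilon_E$ are genuinely ad hoc iterations, and performing them on $\mathcal{X}$ while checking they restrict compatibly to both $X_+$ and $X_-$ requires pinning down exactly which derived-section and Koszul-homology groups on $\mathcal{X}$ agree with their $X_+$ counterparts via Corollary~\ref{wpscor}, and verifying that Gaussian elimination, the $X_-$-contractibility of $\hat{S}$, and the shift bookkeeping of the mapping cone are mutually consistent. A secondary nuisance is the passage from ``$\iota_-^*\hat{S}(t)$ acyclic'' to ``null-homotopic in the dg sense'', which is why I would route step (b) through the homotopy-perturbation argument rather than merely invoking exactness.
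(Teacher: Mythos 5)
Your proposal is correct in outline but organizes the argument quite differently from the paper. The paper works ``from the $X_-$ side'': it resolves $E$ on $X_-$ by the explicit complex $\bar{S}(t)^{\oplus m_E}\oplus E'$ (the truncated Koszul/Euler complex), runs the perturbation algorithm of Lemma \ref{qes} to get a brane $(\mathcal{E},d_{\mathcal{E}})\in\iota_-^*\mathcal{G}_{t+1}$ homotopic to $E$ on $X_-$, transports it to $X_+$, and then does the real work there: a filtered homotopy-perturbation argument showing $(\mathcal{E},d_{\mathcal{E}})\simeq(C_E,d_C)$ on $X_+$, starting from the evident equivalence of their Koszul parts. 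You instead work ``from the $\mathcal{X}$ side'': you lift the construction of $\epsilon_E$ to the Artin stack (legitimate, since all the line bundles in $\mathcal{H}om(\tilde{E},\hat{\mathcal{H}}^\vee\otimes\hat{S}(t))$ have weight $>-d$ so Corollary \ref{wpscor} identifies the relevant section and Koszul-homology computations on $\mathcal{X}$ and $X_+$), take the cone there, Gaussian-eliminate the matched $\cO(t)^{\oplus m_E}$ pair to land in $\mathcal{G}_{t+1}$, and then verify the $X_-$ restriction by proving $\iota_-^*(\hat{S},d_S)\simeq 0$ (Koszul complex of the deleted unstable locus $\{y=0\}$, made rigorous by the bounded exterior-grade filtration since acyclicity of the underlying complex of bundles does not by itself give a strict contraction). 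The two proofs shift the burden in opposite directions: the paper gets the $X_-$ identification for free from Lemma \ref{qes} and must then compare two a priori unrelated curved differentials on $X_+$, while you get the $X_+$ identification for free by construction and must instead establish the contractibility of $S$ on $X_-$ and the lift of $\hat{\epsilon}$; both residual burdens are handled by the same filtered spectral-sequence technique, and the underlying bundles ($\bar{S}(t)^{\oplus m_E}\oplus E'$ versus your reduced cone) coincide. Two points you flag yourself do need to be nailed down rather than waved at: the cone-versus-cocone shift convention (which must match the paper's, where $\mathrm{cone}(j)\simeq\bar{S}$ restricts on $X_-$ to $\cO$ with no shift), and the validity of Gaussian elimination for curved modules (standard, but it is the step that keeps $\widetilde{C}$ inside the window $[t+1,t+d]$). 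Neither is a gap, and arguably your route makes the geometric mechanism --- the brane $S$ becoming massless/trivial on the other quotient --- more visible than the paper's comparison of perturbation algorithms.
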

\begin{proof}
Calculating $\Phi_t([(E, d_E)]$ is easy since $(E, d_E)\in \iota^*_+\mathcal{G}_t$,
it is given by exactly the same data as $(E,d_E)$ but considered as a brane on $X_-$.
To apply $\Phi_{t+1}^{-1}$ to it we have to replace it with a homotopy equivalent
brane that lies in $\iota^*_- \mathcal{G}_{t+1}$, which we know we can do by Lemma
\ref{qes}. In fact we can do this fairly explicitly: split $E$ into its factors
$$E = \cO(t)^{\oplus m_E}\oplus E'$$
where $E'$ is a direct sum of line bundles from $\set{ \cO(t+1),...,\cO(t+d-1)}$,
then we can resolve $E$ (recall Lemma \ref{resolution}) by the complex
$$(\mathcal{E}, \partial_{\mathcal{E}}):=\bar{S}(t)^{\oplus m_E} \oplus E'  \iso E$$
where $\bar{S}$ is the complex
$$(\bar{S}, \partial_{\bar{S}}) := (\bigwedge^{\geq 1} (V_y^\vee[1]),  \neg \Sigma_i y_i\partial_{y_i})$$
given by truncating $(S, \partial_S)$. Now we run the algorithm of Lemma \ref{qes} to get a brane $(\mathcal{E},
d_{\mathcal{E}})\in \iota^*_-\mathcal{G}_{t+1}$, which we can then transport
back to $X_+$. This brane is graded by the powers of the exterior algebra, as before we call this the exterior grading.

 Define an exterior grading on $C_E$ by putting $E$ in grade zero and shifting the exterior grading on $\mathcal{H}^\vee\otimes S(t)$ by 1 (as one usually would for a mapping cone). The differential on $C_E$ is then a sum of terms of exterior grade $\geq -1$, and the term of exterior grade -1 is just the term induced from the Koszul differential $\partial_S$ on $S$. Denote this term by $\partial_C$. Then $(\mathcal{E}, \partial_\mathcal{E})$ and $(C_E, \partial_C)$ are branes on the LG model $(X_+, 0)$, and they are clearly homotopy equivalent. Indeed, $(C_E, \partial_C)$ is
$$[\cO(t)^{\oplus m_E} \oplus E' \stackrel{(j^{\oplus m_E},0)}{\longrightarrow} ( S(t)^{\oplus m_E} ,\partial_S)]
   $$

where $j: \cO \to S$ is the inclusion of $\cO = \bigwedge^0 V_y^\vee \into S$,
and the cone on $j$ is clearly homotopy equivalent to $(\bar{S}, \partial_{\bar{S}})$. This means we have maps 
$$ \xymatrix{ 
C_E \ar@(ul,dl)[]_{h_1} \ar@/^/[rr]^{f_0} 
&& \mathcal{E} \ar@/^/[ll]^{g_0}  \ar@(dr,ur)[]_{\hat{h}_1}
} $$
forming a homotopy equivalence (with respect to $\partial_C$ and $\partial_{\mathcal{E}}$), where $f_0$ and $g_0$ have both R-charge and exterior grade 0 and $h_1$ and $\hat{h}_1$ have R-charge -1 and exterior grade 1. We claim we can use our iterative trick once again to perturb these maps by terms of increasing exterior grade until we get a homotopy equivalence between $(C_E, d_C)$ and $(\mathcal{E}, d_{\mathcal{E}})$. The argument is much the same as before: firstly observe that 
$$RHom_{X_+}((C_E\oplus \mathcal{E}, \partial_C\oplus \partial_{\mathcal{E}}),(C_E\oplus \mathcal{E}, \partial_C\oplus \partial_{\mathcal{E}}))$$
has homology only in exterior grade zero, because $\partial_C$ and $\partial_{\mathcal{E}}$ have homology only in exterior grade zero. Secondly, let
$$F_0 = \left( \begin{array}{cc} 0 & f_0 \\ g_0 & 0 \end{array} \right) \;\;\;\;\; H_1 = \left( \begin{array}{cc} h_1 & 0 \\ 0 &\hat{h}_1 \end{array} \right)$$
be the elements of this dga that we want to perturb, and let $d=d_C\oplus d_{\mathcal{E}}$ and $\partial = \partial_C\oplus \partial_{\mathcal{E}}$. The equations we want to solve are
$$[ d, F] = 0$$
$$F^2 = \id + [d, H]$$
which are equivalent to
$$[\partial, F] = -[(d-\partial), F]$$
$$[\partial, H] = F^2-\id - [(d-\partial), H]$$
and it is easy to check that if these equations hold in exterior grade $\leq k$ then the right-hand-sides are closed with respect to $[\partial,-]$ so by acyclicity they can be solved in exterior grade $k+1$.
\end{proof}


\bibliographystyle{plain}
\bibliography{mybib}

\end{document}